\DeclarePairedDelimiter\floor{\lfloor}{\rfloor}
\tikzset{
    invisible/.style={opacity=0},
    visible on/.style={alt={#1{}{invisible}}},
    alt/.code args={<#1>#2#3}{%
      \alt<#1>{\pgfkeysalso{#2}}{\pgfkeysalso{#3}}%
  }
}
\newtheorem{theorem}{Theorem}
\newtheorem{proposition}[theorem]{Proposition}
\newtheorem{corollary}[theorem]{Corollary}
\newtheorem{lemma}[theorem]{Lemma}
\theoremstyle{break} % les theorem vont à la ligne dorénavant ("plain" otherwise)
\newenvironment{proof}%
{{\par\noindent \bf Proof. \nobreak}}%
{\nobreak \removelastskip \nobreak \hfill $\Box$ \medbreak}
\newenvironment{remark}{\par \medskip \noindent {\bf Remark. }\nobreak}{\par \medskip}
\def\paragraph#1{{\bf #1\ }}
\newcommand{\expo}{\mathrm{e}}
\newcommand{\Var}{\mathrm{Var}}
\newcommand{\dd}{\mathrm{d}}
\newcommand{\EE}{\mathbb{E}}
\newcommand{\R}{{\mathbb{R}}}
\newcommand{\cS}{\mathcal{S}}
\newcommand{\cA}{\mathcal{A}}
\newcommand{\cP}{\mathcal{P}}
\newcommand{\cB}{\mathcal{B}}
\newcommand{\cO}{\mathcal{O}}
\newcommand{\cM}{\mathcal{M}}
\newcommand{\bX}{\mathbf{X}}
\newcommand{\ind}{\mathbbm{1}}
\newcommand{\GBC}{\mathrm{GBC}}
\def\law{\mathcal{L}}
\def\supp{\operatorname{supp}}
\title{The fractal geometry of opinion formation}
\author{Fei Cao \footnotemark[1] \and Roberto Cortez \footnotemark[2]}
\begin{document}
\maketitle

\footnotetext[1]{Amherst College - Department of Mathematics, Amherst, MA 01002, USA}
\footnotetext[2]{Universidad Andres Bello, Departamento de Matemáticas, Sazié 2212, sexto piso, Santiago, Chile.}

\begin{abstract}
In this manuscript, we introduce and study a variant of the agent-based opinion dynamics proposed in a recent work \cite{cao_fractal_2024}, within the framework of an interacting multi-agent system, where agents are assumed to interact with each other and update their opinions after each pairwise encounter. Specifically, our opinion model involves a large crowd of $N$ indistinguishable agents, each characterized by an opinion value ranging within the interval $[-1,1]$. At each update time, two agents are picked uniformly at random and the opinion of one agent will either shift by a proportion $\mu \in (0,1]$ towards $+1$, or by a proportion $\lambda \in (0,1]$ towards $-1$, with probabilities depending on the other agent's opinion. We rigorously derive the mean-field limit PDE that governs the large-population limit of the agent-based model and present several quantitative results demonstrating convergence to the unique equilibrium distribution. Remarkably, for a suitable choice of model parameters, the long-term equilibrium opinion profile displays a striking self-similar structure that generalizes the celebrated Bernoulli convolution, a topic extensively studied in the context of fractal geometry \cite{erdos_family_1939,varju_recent_2016}. These findings also enhance our understanding of the opinion fragmentation phenomenon and may provide valuable insights for the development of more sophisticated models in future research.
\end{abstract}

\noindent {\bf Keywords: Agent-based model; Opinion dynamics; Fractals; Opinion fragmentation; Mean-field; Sociophysics; Bernoulli convolution}

\tableofcontents

%%%%%%%%%%%%%%%%%%%%%%%%%%%%%%%%%%%%%%%%%%%%%%%%%%%%%%%%%%%%%%%%%%%%%%
%%%%%%%%%%%%%%%%%%%%%%%%%%%%%%%%%%%%%%%%%%%%%%%%%%%%%%%%%%%%%%%%%%%%%%
\section{Introduction}\label{sec:sec1}
\setcounter{equation}{0}
%%%%%%%%%%%%%%%%%%%%%%%%%%%%%%%%%%%%%%%%%%%%%%%%%%%%%%%%%%%%%%%%%%%%%%
%%%%%%%%%%%%%%%%%%%%%%%%%%%%%%%%%%%%%%%%%%%%%%%%%%%%%%%%%%%%%%%%%%%%%%
Recently, opinion dynamics are receiving increasing attention and are widely applied in areas such as political science, internet culture studies, and epidemic control. The mathematical study of how individuals form opinions and influence each other within a population dates back to at least the mid-1960s. Over the past few decades, the integration of physics-inspired methods into the social sciences has opened new avenues for modeling complex collective social and economic behaviors. This interdisciplinary approach has given rise to the fields of sociophysics and econophysics, both of which have drawn extensively on the tools of statistical physics  \cite{bennaim_2005,toscani_2006}. Sociophysics, introduced in \cite{galam_gefen_shapir_1982}, aims to unravel the dynamics of human social behavior through probabilistic and dynamical system frameworks. Since the early 2000s, the field has undergone rapid expansion, marked by the introduction and rigorous study of several landmark models \cite{deffuant_mixing_2000,hegselmann_opinion_2002,sznajd_opinion_2000}. While the literature on opinion dynamics is extensive and still growing \cite{cao_k_2021,cao_iterative_2024,castellano_statistical_2009,jabin_clustering_2014,sen_sociophysics_2014}, some of the most widely studied frameworks include the Deffuant model (also known as the bounded confidence model) \cite{deffuant_mixing_2000}, the Hegselmann-Krause model \cite{hegselmann_opinion_2002}, and the Sznajd model \cite{sznajd_opinion_2000}, along with their many generalizations.

The present work is primarily motivated by our recent research \cite{cao_fractal_2024}, where we introduced and analyzed a novel stochastic agent-based opinion model on the interval $[-1,1]$, using probabilistic and analytic tools. Notably, in that study, we identified a new mathematical description of the so-called opinion fragmentation phenomenon: under the (rigorous) large-population limit $N \to \infty$, the mean-field version of the model can give rise to a long-term equilibrium opinion profile whose support exhibits a fractal structure (under suitable choice of model parameters). The implications of these findings are significant, as these results suggest that in the long run, public opinions might become so polarized and fragmented that it becomes impossible for any agent to hold opinion values within certain subintervals of the opinion space $[-1,1]$.

%%%%%%%%%%%%%%%%%%%%%%%%%%%%%%%%%%%%%%%%%%%%%%%%%%%%%%%%%%%%%%%%%%%%%%
\subsection{Description of the model}
%%%%%%%%%%%%%%%%%%%%%%%%%%%%%%%%%%%%%%%%%%%%%%%%%%%%%%%%%%%%%%%%%%%%%%

We study an agent-based opinion dynamics model framed as an interacting multi-agent system, where individuals adjust their opinions through pairwise interactions. Specifically, consider a population of individuals/agents of size $N \in \mathbb{N}_+$. At any given time, each (indistinguishable) agent is uniquely characterized by her general opinion, or political standpoint, on a given issue, represented as a scalar ranging from $-1$ to $1$. We denote by $X^{i,N}_t \in [-1,1]$ the opinion of agent $i$ at time $t\geq 0$. A convenient analogue with terminologies from political sciences also enables us to interpret $-1$ and $1$ as representing extreme left-wing and right-wing positions, respectively. The dynamics of our agent-based model are described as follows:

\begin{itemize}
\item At each random time generated by a Poisson clock with rate $N/2$, select a pair of distinct agents $(i,j) \in \{1,\cdots,N\}^2 \setminus \{i=j\}$ uniformly at random and independently from the selection history. This guarantees the Markov property of the dynamics and also ensures that each agent interacts with all other agents at rate one. In each interaction, agent $j$ will state an opinion, which can be either $-1$ or $+1$, whereas agent $i$ acts as the ``listener''.
\item With probability $\left(1+X_{t^-}^{j,N}\right)/2$, agent $j$ states the opinion $+1$, and with the complementary probability $\left(1-X_{t^-}^{j,N}\right)/2$, agent $j$ states the opinion $-1$.
\item If agent $j$ expresses the opinion $+1$, agent $i$ updates her opinion by shifting a fixed proportion $\lambda \in (0,1]$ closer to $-1$.
\item If agent $j$ expresses the opinion $-1$, agent $i$ updates her opinion by shifting a fixed proportion $\mu \in (0,1]$ closer to $+1$.
\end{itemize}

Mathematically, if the pair of agents $(i,j)$ is chosen to interact at time $t$, then the opinion of agent $i$ will be updated according to
\begin{equation}
\label{eq:dynamics}
X_t^{i,N} = \begin{cases}
X_{t^-}^{i,N} - \lambda\cdot\left(1 + X_{t^-}^{i,N} \right) &~~ \textrm{with probability}~~ \frac 12 + \frac{X_{t^-}^{j,N}}{2}, \\
X_{t^-}^{i,N} + \mu\cdot\left(1 - X_{t^-}^{i,N} \right) &~~ \textrm{with probability}~~ \frac 12 - \frac{X_{t^-}^{j,N}}{2},
\end{cases}
\end{equation}
in which $\lambda \in (0,1]$ and $\mu \in (0,1]$ are user-specified model parameters which control the rate at which agents adjust their opinions toward $-1$ and $1$, respectively. By the obvious symmetry, we can and shall assume that $\lambda \leq \mu$ without any loss of generality.

We will assume throughout this article that the collection of initial conditions $X_0^{1,N},\ldots,X_0^{N,N}$ are independent and $\rho_0$-distributed, where $\rho_0$ is a given probability measure on $[-1,1]$. Denote ${\bf X}_t^N = (X_t^{1,N},\ldots,X_t^{N,N})$. We observe that the collection $(X_t^{1,N},\ldots,X_t^{N,N})$ is clearly exchangeable for any $t \geq 0$.

It is worth mentioning that the agent-based opinion model proposed and analyzed in the recent work \cite{cao_fractal_2024} is given by the update rules
\begin{equation}
\label{eq:dynamics_old}
X_t^{i,N} = \begin{cases}
X_{t^-}^{i,N} + \mu\cdot\left(1 - X_{t^-}^{i,N} \right) &~~ \textrm{with probability}~~ \frac 12 + \frac{X_{t^-}^{j,N}}{2}, \\
X_{t^-}^{i,N} - \lambda\cdot\left(1 + X_{t^-}^{i,N} \right) &~~ \textrm{with probability}~~ \frac 12 - \frac{X_{t^-}^{j,N}}{2},
\end{cases}
\end{equation}
which appear to be quite similar to the dynamics \eqref{eq:dynamics} investigated in the present article, except that the steps toward $-1$ and $+1$ (or, equivalently, the corresponding probabilities) have been exchanged. However, this similarity is only apparent. Firstly, the interactions can be interpreted in a completely opposite way: in the model \eqref{eq:dynamics_old}, agent $i$ moves \emph{toward} the opinion stated by $j$, whereas in \eqref{eq:dynamics} the agent moves \emph{away} from it. In other words, stated opinions either \emph{persuade} or \emph{repel} the listener, respectively. In the language of sociologists \cite{heinrich_conformity_2025}, the dynamics \eqref{eq:dynamics_old}, as well as most opinion models \cite{cao_k_2021,castellano_statistical_2009,deffuant_mixing_2000,hegselmann_opinion_2002,jabin_clustering_2014,sznajd_opinion_2000}, exhibit \emph{conformity}, whereas the model \eqref{eq:dynamics} studied in this manuscript exhibits \emph{anticonformity}. The latter type of behavior appears to be at least partially supported by recent studies in the social sciences that examine how people react to political disagreement and the influence of extreme or uncivil opinions \cite{barnidge_2018,goyanes_etal_2021,pandey_etal_2023,zhang_shoenberger_2024}. Secondly, from a mathematical point of view, the long-time behavior of these two dynamics differs significantly. For instance, as we shall see, in the large-population limit, the dynamics \eqref{eq:dynamics} admits a nontrivial equilibrium distribution for any values of $\lambda$ and $\mu$, whereas \eqref{eq:dynamics_old} admits such an equilibrium only when $\lambda = \mu$ (see \cite{cao_fractal_2024}). Thus, the long-time behavior of the proposed dynamics \eqref{eq:dynamics} is much more robust with respect to the model parameters $\lambda$ and $\mu$, giving rise to a richer family of equilibrium distributions with interesting properties, as we will discuss shortly.

%%%%%%%%%%%%%%%%%%%%%%%%%%%%%%%%%%%%%%%%%%%%%%%%%%%%%%%%%%%%%%%%%%%%%%
\subsection{Main results and overview of the paper}
%%%%%%%%%%%%%%%%%%%%%%%%%%%%%%%%%%%%%%%%%%%%%%%%%%%%%%%%%%%%%%%%%%%%%%

A central objective of this work is to examine the proposed opinion dynamics \eqref{eq:dynamics} through a kinetic perspective, which involves investigating the mean-field limit as $N \to \infty$ of the agent-based model \eqref{eq:dynamics}. This yields a Boltzmann-type PDE for the evolution of the distribution of opinions in an infinite population, whose asymptotic behavior as $t \to \infty$ can then be explored. We encapsulate the schematic illustration of the strategy used in this manuscript in Figure \ref{fig:scheme_sketch}.

%\begin{figure}[!htb]
\begin{figure}
\centering
\includegraphics[scale=0.8]{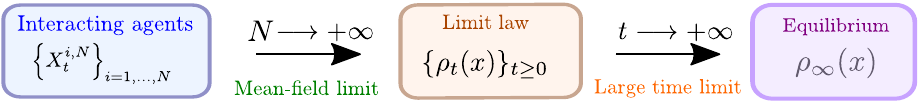}
\caption{Schematic illustration of the limiting procedure carried out for the study the multi-agent opinion dynamics \eqref{eq:dynamics}. We begin by establishing a propagation of chaos result in the large-population regime as $N \to \infty$, wherein individual interactions between agents become negligible, giving rise to a mean-field description governed by a Boltzmann-type PDE. Once this limit is formalized, we derive a series of quantitative estimates that substantiate the convergence of the mean-field PDE solution toward its unique equilibrium distribution.}
\label{fig:scheme_sketch}
\end{figure}

More specifically, the remainder of the paper is organized as follows: Section~\ref{sec:sec2} is devoted to the investigation of the mean-field limit of our opinion model \eqref{eq:dynamics} under the large-population limit $N \to \infty$, which allows us to link the stochastic multi-agent opinion dynamics \eqref{eq:dynamics} to a Boltzmann-type PDE \eqref{eq:PDE}. Heuristically speaking, our justification of the so-called \emph{propagation of chaos} phenomenon (see Theorem \ref{thm:PoC}) implies that as the population size $N$ tends to infinity, any subset of $k$ agents from the total population of size $N$ becomes more and more independent, and the evolution of each agent's opinion can be well-approximated to a (non-interacting) nonlinear limiting process $\{Z_t\}_{t\geq 0}$ \eqref{eq:SDE_linear}. Moreover, a Boltzmann-type PDE \eqref{eq:PDE} satisfied by the law of $Z_t$ (denoted by $\rho_t$) at each fixed time is established. As a consequence, we can also evaluate the first moment along the solution to the mean-field PDE \eqref{eq:PDE} explicitly, which underscores several fundamental distinctions relative to the dynamics \eqref{eq:dynamics_old} studied in \cite{cao_fractal_2024}.

Once the rigorous mean-field limit of the multi-agent system \eqref{eq:dynamics} is carried out, in Section~\ref{sec:sec3} we perform a large-time asymptotic analysis of the solution $\rho_t$ to the mean-field PDE \eqref{eq:PDE}. In particular, we prove several quantitative convergence guarantees regarding the large time convergence of solutions of \eqref{eq:PDE} to its (unique) equilibrium distribution (denoted by $\rho_\infty$). Our quantitative convergence analysis of the mean-field PDE \eqref{eq:PDE} involves various probabilistic and analytic tools, such as Wasserstein distances and Fourier-based metrics.

In Section~\ref{sec:sec4} we focus on the study of the equilibrium opinion distribution $\rho_\infty$ associated to the Boltzmann-type PDE \eqref{eq:PDE}. We show that, when the tendency of agents to move toward the extreme opinions $\pm 1$ is sufficiently strong---that is, when $\lambda + \mu >1$---the equilibrium distribution is supported on a Cantor-like fractal set, and we compute its Hausdorff dimension. As discussed in \cite{cao_fractal_2024} for the special case $\lambda = \mu > 1/2$, this can be seen as a manifestation of the aforementioned \emph{opinion fragmentation} phenomenon: the empirically observed tendency of the agents' opinions to cluster together, rather than spread continuously on $[-1,1]$. In the special case where $\lambda = \mu$, we recover the same stationary distribution as demonstrated in the recent work \cite{cao_fractal_2024}, which coincides with the \emph{Bernoulli convolution}, studied extensively in the fractal geometry literature \cite{erdos_family_1939,kershner_symmetric_1935,solomyak_random_1995,varju_recent_2016,varju_absolute_2019}. On the other hand, when the model parameters $\lambda,\mu$ are no longer equal, we obtain a family of non-trivial distributions $(\rho_\infty)_{\lambda,\mu}$, which generalizes the classic Bernoulli convolution and which has also been studied in the fractal geometry literature under the more general framework of iterative function systems \cite{hochman_AnnMath_memoir_2015,saglietti_etal_2018}. However, due to the dynamical nature of our model within the framework of interacting particle systems, the equilibrium object obtained in this work is still a genuinely interesting distribution that has not been studied systematically in the fractal geometry literature (to the best of our knowledge).

Finally, we conclude the present paper in Section~\ref{sec:sec5} where we summarize the main contributions of this paper and outline several compelling and technically challenging avenues for future research inspired by the opinion dynamics investigated in this work.

%%%%%%%%%%%%%%%%%%%%%%%%%%%%%%%%%%%%%%%%%%%%%%%%%%%%%%%%%%%%%%%%%%%%%%
%%%%%%%%%%%%%%%%%%%%%%%%%%%%%%%%%%%%%%%%%%%%%%%%%%%%%%%%%%%%%%%%%%%%%%
\section{Derivation of the mean-field opinion dynamics}\label{sec:sec2}
\setcounter{equation}{0}

%%%%%%%%%%%%%%%%%%%%%%%%%%%%%%%%%%%%%%%%%%%%%%%%%%%%%%%%%%%%%%%%%%%%%%
\subsection{Mean-field limit for general $\mu$ and $\lambda$}\label{subsec:2.1}

The \emph{mean-field limit} of the system $\bX^N$ captures the behavior of any fixed agent $X_t^{i,N}$, in the large-population limit as $N\to\infty$. It can refer to either a stochastic process $(Z_t)_{t\geq 0}$, or its collection of marginal distributions $(\rho_t:=\law(Z_t))_{t\geq 0}$. The mean-field process is typically described by a jump SDE driven by Poisson point measures, which we now derive informally.

It is straightforward to verify that the agent-based dynamics \eqref{eq:dynamics} is equivalent (in law) to the following system of SDEs: for each $i=1,\ldots,N$,
\begin{equation}
\label{eq:SDE_Xi}	
\begin{aligned}
	\dd X_t^{i,N}
	&= \int_0^1 \left[\mu (1-X_{t^-}^{i,N}) \mathbbm{1} \left\{ u < \tfrac{1-\cA_{t^-}^{i,N}}{2} \right\} \right. \\
	& \qquad \qquad \qquad  \left. {} - \lambda (1+X_{t^-}^{i,N}) \mathbbm{1} \left\{ u \geq \tfrac{1-\cA_{t^-}^{i,N}}{2} \right\}
	\right] \cP^i(\dd t, \dd u),
\end{aligned}
\end{equation}
where $\cP^i(\dd t, \dd u)$ is a Poisson point measure on $[0,\infty) \times [0,1]$ with intensity $\dd t \, \dd u$, the collection $(\cP^i)_{i=1}^N$ is independent, and $\cA_t^{i,N}$ denotes the average opinion of all agents other than $i$:
\[
	\cA_t^{i,N} \coloneqq \frac{1}{N-1} \sum_{\substack{j=1 \\ j\neq i}}^N X_t^{j,N}
\]
By examining \eqref{eq:SDE_Xi}, it is natural to conjecture that the mean-field process $(Z_t)_{t\geq 0}$, which describes the evolution of the opinion of a typical agent as the total number of agents goes to infinity (i.e., $N \to \infty$),  should be described by the following SDE:
\begin{equation}
	\label{eq:SDE_linear}	
	\begin{aligned}
		\dd Z_t
		&= \int_0^1 \left[\mu (1-Z_{t^-}) \mathbbm{1} \left\{ u < \tfrac{1-m_t}{2} \right\} \right. \\
		& \qquad \qquad \qquad  \left. {} - \lambda (1+Z_{t^-}) \mathbbm{1} \left\{ u \geq \tfrac{1-m_t}{2} \right\}
		\right] \cP(\dd t, \dd u),
	\end{aligned}
\end{equation}
where $\cP(\dd t, \dd u)$ is a Poisson point measure with intensity $\dd t \, \dd u$, and $m_t \coloneqq \EE[Z_t]$. Since $m_t$ depends on the law of $Z_t$, this SDE is nonlinear, in principle. However, $m_t$ can be computed explicitly, which turns \eqref{eq:SDE_linear} into a linear and time-inhomogeneous SDE. Therefore, it admits a unique strong solution $(Z_t)_{t\geq 0}$.

More specifically: by taking expectations in \eqref{eq:SDE_linear}, it is straightforward to see that $m_t$ must satisfy the following autonomous ODE:
\[\frac{\dd}{\dd t} m_t = \frac{\mu - \lambda}{2}\,(1 + m^2_t) - (\mu + \lambda)\,m_t,\]
which leads us to
\begin{equation}
\label{eq:mt_formula}
	m_t =  \begin{cases}
		m_0\,\expo^{-2\,\mu\,t},&~~\textrm{if~ $\lambda = \mu$},\\
		m_\infty + \frac{\gamma + \sqrt{\gamma^2 - 1} - m_\infty}{1+C\,\expo^{2\,\sqrt{\mu\,\lambda}\,t}}, &~~\textrm{if~ $\lambda \neq \mu$},
	\end{cases}
\end{equation}
where
\[
	m_\infty
	\coloneqq \frac{\sqrt{\mu}-\sqrt{\lambda}}{\sqrt{\mu}+\sqrt{\lambda}},
	\qquad
	\gamma
	\coloneqq \frac{\mu + \lambda}{\mu - \lambda},
	\qquad
	C
	\coloneqq \frac{\gamma + \sqrt{\gamma^2 - 1} - m_\infty}{m_0 - m_\infty} - 1.
\]
Notice that $m_t \to m_\infty$, where $m_\infty$ can be any number in $[0,1)$, depending on the specific values of the underlying model parameters $0<\lambda \leq \mu\leq 1$. This is a key difference from the model studied in \cite{cao_fractal_2024}, for which $m_\infty = 1$ whenever $\lambda < \mu$, and $m_\infty = m_0$ when $\lambda = \mu$.

The rigorous convergence of the system $\bX^N$ to its mean-field limit as $N\to\infty$, is called \emph{propagation of chaos} \cite{sznitman_topics_1991}. It has been studied extensively for a wide variety of systems arising from physical, social and economic sciences, especially in the context of kinetic models of Boltzmann type, see for instance \cite{cao_derivation_2021,cao_entropy_2021,cao_explicit_2021,cao_interacting_2022,cao_k_2021,cao_uncovering_2022,cao_uniform_2024,cortez_quantitative_2016,cortez_uniform_2016,cortez_fontbona_2018,degond_macroscopic_2004,graham_meleard_1997,holley_ergodic_1975,jabin_clustering_2014,liggett_interacting_1985}. For the model of the present article, we provide the following quantitative propagation of chaos result. The proof follows along the same lines as the one given in \cite{cao_fractal_2024} for a related model, so we omit it here. Denote by $W_p$ the Wasserstein metric of order $p\geq 1$ between probability measures on $\R^N$ with respect to the normalized distance $d({\bf x},{\bf y}) = \left(\frac{1}{N}\,\sum_{i=1}^N |x^i - y^i|^p\right)^{1/p}$.

\begin{theorem}[Propagation of chaos]
	\label{thm:PoC}
	There exists a universal constant $C>0$ such that for all $t>0$, it holds that
	\begin{equation}
		\label{eq:PoC}
		W_1 \left( \law( {\bf X}_t^N) \, , \, \rho_t^{\otimes N} \right)
		\leq \frac{C}{\sqrt{N}} \, \expo^{ (\mu + \lambda)t}.
	\end{equation}
\end{theorem}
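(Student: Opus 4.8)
The plan is to run a synchronous coupling between the interacting system $\bX_t^N$ and a family of independent copies of the mean-field process, followed by a Gr\"onwall argument on the expected pairwise discrepancy. Concretely, for each $i$ I would build a process $Z_t^i$ solving the nonlinear SDE \eqref{eq:SDE_linear}, driven by the \emph{same} Poisson point measure $\cP^i$ that drives $X_t^{i,N}$ in \eqref{eq:SDE_Xi}, and started from the same initial value $Z_0^i = X_0^{i,N}$. By construction the $(Z^i)_{i=1}^N$ are i.i.d.\ with common marginals $(\rho_t)_{t\ge 0}$, so $(Z_t^1,\ldots,Z_t^N)$ is an admissible coupling; using exchangeability together with the normalized distance $d$ underlying $W_1$,
\[
W_1\left(\law(\bX_t^N), \rho_t^{\otimes N}\right) \le \EE\left[\frac{1}{N}\sum_{i=1}^N |X_t^{i,N} - Z_t^i|\right] = \EE\big[\,|X_t^{1,N} - Z_t^1|\,\big] =: u_t .
\]
It therefore suffices to control the single scalar $u_t$, which satisfies $u_0 = 0$.

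Next I would write the jump generator of $t\mapsto |X_t^{i,N} - Z_t^i|$. The two coordinates jump simultaneously (same clock $\cP^i$) and follow the \emph{same} affine maps $x\mapsto x+\mu_+(1-x)$ and $x\mapsto x-\mu_-(1+x)$; the only discrepancy is that the threshold separating the two outcomes is $(1-\cA_{t^-}^{i,N})/2$ for $X^{i,N}$ and $(1-m_t)/2$ for $Z^i$. Hence, whenever the Poisson mark $u$ lands outside the interval between these two thresholds, both coordinates move in the same direction, and since the relevant affine maps are contractions toward $\pm 1$ with factors $1-\mu_\pm$, the distance $|X^{i,N}-Z^i|$ does not increase. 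On the complementary ``disagreement'' event---whose width equals $|\cA_{t^-}^{i,N}-m_t|/2$---the two coordinates move in opposite directions and the distance can jump by an $O(1)$ amount. Carrying out this bookkeeping produces a differential inequality
\[
\frac{\dd}{\dd t}\, u_t \;\le\; K\, u_t \;+\; c\,\EE\big|\cA_t^{1,N} - m_t\big| ,
\]
whose constants are read off from the generator and combine to the exponent $\mu_+ + \mu_-$ appearing in \eqref{eq:PoC}.

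The crux, which I expect to be the main obstacle, is then to estimate $\EE|\cA_t^{1,N}-m_t|$. I would split it, via the coupling, into a ``propagated'' part and a genuine fluctuation part:
\[
\big|\cA_t^{1,N} - m_t\big| \;\le\; \frac{1}{N-1}\sum_{j \ne 1} |X_t^{j,N} - Z_t^j| \;+\; \Big|\frac{1}{N-1}\sum_{j\ne 1}\big(Z_t^j - m_t\big)\Big| .
\]
Taking expectations, the first term contributes exactly $u_t$ by exchangeability, while the second is the absolute deviation of an empirical mean of i.i.d.\ copies of $Z_t$ from $m_t = \EE[Z_t]$; by Jensen's inequality it is bounded by $\sqrt{\Var(Z_t)/(N-1)} \le 1/\sqrt{N-1}$, using $Z_t\in[-1,1]$. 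This is precisely where the $N^{-1/2}$ rate and the (essential) independence of the mean-field copies enter, and where one must check that the variance bound is uniform in time so the prefactor does not degrade.

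Substituting back yields a closed inequality $\frac{\dd}{\dd t} u_t \le (\mu_+ + \mu_-)\,u_t + C/\sqrt N$ with $u_0 = 0$, and Gr\"onwall's lemma gives
\[
u_t \;\le\; \frac{C}{\sqrt N}\cdot \frac{\expo^{(\mu_+ + \mu_-)t} - 1}{\mu_+ + \mu_-} \;\le\; \frac{C'}{\sqrt N}\,\expo^{(\mu_+ + \mu_-)t} ,
\]
which is the claimed bound. Since this mirrors the argument in \cite{cao_fractal_2024}, the only genuinely model-specific work is the generator bookkeeping of the second paragraph and the observation that the separation of the two thresholds is controlled by $|\cA_t^{i,N}-m_t|$; everything else is the standard coupling/Gr\"onwall skeleton for propagation of chaos.
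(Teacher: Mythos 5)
Your proposal is correct and is essentially the proof the paper intends: the paper omits it, deferring to \cite{cao_fractal_2024}, but the identical synchronous coupling --- shared Poisson measures $\cP^i$, the splitting of $\EE|\cA_t^{i,N}-m_t|$ into a propagated term equal to your $u_t$ plus an i.i.d.\ fluctuation of order $N^{-1/2}$, and Gr\"onwall --- is written out in this paper in the proof of Theorem~\ref{thm:UPoC} for the case $\mu_+=\mu_-$. The one point you assert rather than verify is that the constants ``combine to the exponent $\mu_++\mu_-$'': bounding the disagreement jump crudely by the diameter $2$, as your ``$O(1)$'' suggests, yields rate $1$ in the Gr\"onwall inequality, which does not imply \eqref{eq:PoC} when $\mu_++\mu_-<1$. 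To land on the stated exponent, bound the change of $|X_t^{i,N}-Z_t^i|$ on a disagreement event by the sum of the two jump magnitudes, e.g.\ $\mu_+\bigl(1-X_{t^-}^{i,N}\bigr)+\mu_-\bigl(1+Z_{t^-}^i\bigr)\le 2(\mu_++\mu_-)$, so that this event, occurring at rate $|\cA_{t^-}^{i,N}-m_t|/2$, contributes $(\mu_++\mu_-)\,\EE|\cA_t^{i,N}-m_t|\le(\mu_++\mu_-)\bigl(u_t+C/\sqrt{N}\bigr)$, and Gr\"onwall then gives exactly \eqref{eq:PoC}.
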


\begin{remark}
We emphasize that Theorem \ref{thm:PoC} is only a finite in time propagation of chaos result since the right hand side of the bound \eqref{eq:PoC} deteriorates exponentially with increasing time $t$. We will significantly improve such finite in time propagation of chaos to a uniform in time estimate in the special case when $\lambda = \mu$ in Theorem \ref{thm:UPoC} later.
\end{remark}

From the SDE \eqref{eq:SDE_linear}, we can also obtain the associated Kolmogorov backward and forward equations governing the evolution of $\mathbb{E}[\varphi(Z_t)]$ (where $\varphi$ denotes a generic test function) and the law $\rho_t$ of $Z_t$, respectively. Indeed, for any bounded and continuous test function $\varphi$, we have
\begin{equation*}
%\label{eq:PDE_weak}
\begin{split}
&\frac{\dd}{\dd t} \int_{-1}^1 \varphi(x) \rho_t(\dd x) \\
&= \int_{-1}^1 \left[ \varphi(x + \mu (1-x))\, \frac{1-m_t}{2}
+ \varphi(x - \lambda (1+x)) \frac{1+m_t}{2}
- \varphi(x) \right] \rho_t(\dd x), \\
&\coloneqq \int_{-1}^1 Q_t[\varphi](x)\,\rho_t(\dd x)
\end{split}
\end{equation*}
where the operator $Q_t[\cdot]$, typically called the (infinitesimal) generator of the process $Z_t$, is defined via
\[
Q_t[\varphi](x) \coloneqq \varphi(x + \mu (1-x))\, \frac{1-m_t}{2} + \varphi(x - \lambda (1+x))\,\frac{1+m_t}{2} - \varphi(x)
\]
for all $(x,t) \in [-1,1] \times [0,\infty)$.

Assuming that the law of $Z_t$ admits a density which is still denoted by $\rho_t$ (upon slight abuse of notation), the Kolmogorov forward equation which describes the evolution of $\rho_t$ can be readily derived as well. Indeed, we observe that
\begin{equation*}
\int_{-1}^1 \varphi\left(x+\mu\cdot(1-x)\right)\,\rho_t(x)\,\dd x = \int_{2\mu-1}^1 \varphi(y)\,\rho_t\left(\frac{y-\mu}{1-\mu}\right)\,\frac{\dd y}{1-\mu}
\end{equation*}
together with
\begin{equation*}
\int_{-1}^1 \varphi\left(x-\lambda\cdot(1+x)\right)\,\rho_t(x)\,\dd x = \int_{-1}^{1-2\lambda} \varphi(y)\,\rho_t\left(\frac{y+\lambda}{1-\lambda}\right)\,\frac{\dd y}{1-\lambda},
\end{equation*}
thus the evolution of $\rho_t$ is governed by the following Boltzmann-type PDE (which should be interpreted in the weak sense):
\begin{equation}
\label{eq:PDE}
\partial_t \rho_t(x) = Q_+[\rho_t](x) - \rho_t(x)
\end{equation}
where
\begin{equation}
\label{eq:operator_Q+}
\begin{aligned}
Q_+[\rho_t](x) &= \frac{1-m_t}{2}\,\frac{\mathbbm{1}\{x> 2\mu -1\}}{1-\mu}\,\rho_t\left(\frac{x-\mu}{1-\mu}\right) \\
&\quad + \frac{1+m_t}{2}\,\frac{\mathbbm{1}\{x\leq 1-2\lambda\}}{1-\lambda}\,\rho_t\left(\frac{x+\lambda}{1-\lambda}\right).
\end{aligned}
\end{equation}

%%%%%%%%%%%%%%%%%%%%%%%%%%%%%%%%%%%%%%%%%%%%%%%%%%%%%%%%%%%%%%%%%%%%%%
\subsection{Uniform propagation of chaos when $\lambda = \mu$}\label{subsec:2.2}

We now aim to prove \emph{uniform-in-time} propagation of chaos of our multi-agent opinion dynamics in the special case when $\lambda = \mu$. We start by showing that in this case $\EE[X_t^{i,N}]$ coincides with $\EE[Z_t] = m_t = m_0\,\expo^{- 2\mu t}$ for all $t\geq 0$, regardless of the number of agents $N$.

\begin{proposition}
	Assume that $\lambda = \mu$. Then for any $1\leq i\leq N$,
	\[
	\EE[X_t^{i,N}] = m_0\,\expo^{ - 2\mu t}.
	\]
\end{proposition}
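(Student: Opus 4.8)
The plan is to derive a closed, linear ODE for the finite-$N$ mean $\EE[X_t^{i,N}]$ directly from the SDE representation \eqref{eq:SDE_Xi}, without passing to the mean-field limit. The crucial phenomenon is that, precisely when $\mu_+ = \mu_- = \mu$, the nonlinear contribution of the interaction term cancels, so that the first moment satisfies an autonomous equation even for fixed $N$.

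First I would take expectations on both sides of \eqref{eq:SDE_Xi}. Since the agents' opinions remain confined to $[-1,1]$, the integrand is bounded; decomposing $\cP^i(\dd t,\dd u)$ into its compensator $\dd t\,\dd u$ plus the compensated martingale measure, the latter contributes nothing in expectation, so only the intensity remains. Integrating the two indicators over $u \in [0,1]$ yields $\tfrac{1-\cA_t^{i,N}}{2}$ and $\tfrac{1+\cA_t^{i,N}}{2}$ respectively, so that
\[
\frac{\dd}{\dd t}\,\EE[X_t^{i,N}] = \EE\!\left[\mu\,(1-X_t^{i,N})\,\frac{1-\cA_t^{i,N}}{2} - \mu\,(1+X_t^{i,N})\,\frac{1+\cA_t^{i,N}}{2}\right].
\]

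The key computation is to expand the bracket. A direct calculation gives
\[
(1-X)(1-\cA) - (1+X)(1+\cA) = -2X - 2\cA,
\]
so that the products $X\,\cA$ and the constant terms cancel exactly, leaving $\tfrac{\dd}{\dd t}\EE[X_t^{i,N}] = -\mu\,\EE[X_t^{i,N} + \cA_t^{i,N}]$. This cancellation is special to the symmetric case $\mu_+ = \mu_-$; for $\mu_+ \neq \mu_-$ a surviving $\EE[X_t^{i,N}\,\cA_t^{i,N}]$ term would prevent the moment equation from closing, which is why the explicit formula \eqref{eq:mt_formula} in the non-symmetric case is only claimed for the mean-field process.

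Finally, I would invoke the exchangeability of $(X_t^{1,N},\ldots,X_t^{N,N})$: since $\EE[X_t^{j,N}]$ is independent of $j$, we have $\EE[\cA_t^{i,N}] = \EE[X_t^{i,N}]$, and therefore $\tfrac{\dd}{\dd t}\EE[X_t^{i,N}] = -2\mu\,\EE[X_t^{i,N}]$. Solving this linear ODE with initial datum $\EE[X_0^{i,N}] = m_0$ (as the initial opinions are $\rho_0$-distributed) gives $\EE[X_t^{i,N}] = m_0\,\expo^{-2\mu t}$, as claimed. The only point requiring any care is the justification that the compensated term has vanishing expectation, which follows from the uniform boundedness of the opinions; the substantive content is the exact cancellation of the nonlinear term noted above, which is what makes the identity hold for every finite $N$ and not merely in the limit.
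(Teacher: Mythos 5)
Your proof is correct and follows essentially the same route as the paper's: take expectations in the SDE \eqref{eq:SDE_Xi}, observe that the drift reduces to $-\mu\,\EE[X_t^{i,N}] - \mu\,\EE[\cA_t^{i,N}]$ after the bilinear terms cancel, close the equation via exchangeability, and solve the resulting linear ODE. The extra remarks on the compensated martingale and on why the cancellation fails for $\mu_+ \neq \mu_-$ are accurate but not needed beyond what the paper already records.
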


\begin{proof}
Denote $h(t) \coloneqq \EE[X_t^{i,N}]$. From \eqref{eq:SDE_Xi} we see that
\[
	\frac{\dd}{\dd t} h(t)
	= \EE\left[
	   \mu (1-X_{t}^{i,N}) \tfrac{1-\cA_{t}^{i,N}}{2}
	   - \mu (1+X_{t}^{i,N}) \tfrac{1+\cA_{t}^{i,N}}{2}
	   \right]
	= - \mu h(t) - \mu \EE[\cA_t^{i,N}].
\]
Since $\EE[\cA_t^{i,N}] = h(t)$ thanks to exchangeability, we arrive at $\frac{\dd}{\dd t} h(t) = -2\mu h(t)$ and the announced result follows.
\end{proof}

Our propagation of chaos analysis involves estimating $\EE[|\cA_t^{i,N} - m_t|]$, which is a law-of-large-numbers type estimate since $\EE[\cA_t^{i,N}] = m_t$. To this end, we introduce
\[
\cS_t^N \coloneqq \sum_{i=1}^N X_t^{i,N}.
\]
which represents the sum of opinion status of all agents at time $t$. The key computation is carried out in the following lemma:

\begin{lemma}
	\label{lem:ES2}
	Assume that $\lambda = \mu$. Then, there exists a constant $C = C(\mu)$ depending only on $\mu$ such that
	\[
		\EE[(\cS_t^N)^2]
		\leq \EE[(\cS_0^N)^2]\, \expo^{-4 \mu t} + CN.
	\]
\end{lemma}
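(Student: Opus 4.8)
The plan is to track the second moment of the total opinion $\cS_t^N$ directly through the jump structure of the dynamics, exploiting the fact that exactly one coordinate changes at each jump. Applying Dynkin's formula (equivalently, the generator of the Markov process $\bX_t^N$) to the function $F(\mathbf{x}) = \bigl(\sum_{i=1}^N x^i\bigr)^2$, I would first record that when the Poisson clock $\cP^i$ fires, the coordinate $X^i$ jumps by $\Delta_i^+ = \mu(1-X^i)$ with probability $\tfrac{1-\cA^i}{2}$ and by $\Delta_i^- = -\mu(1+X^i)$ with probability $\tfrac{1+\cA^i}{2}$ (here and below I suppress the $t,N$ indices), while all other coordinates remain frozen. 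Since $\cS$ then changes to $\cS + \Delta_i^\pm$, the increment of $\cS^2$ is exactly $2\cS\,\Delta_i^\pm + (\Delta_i^\pm)^2$. Summing over $i$ and over the two possible jumps weighted by their probabilities yields
\[
\frac{\dd}{\dd t}\EE[(\cS_t^N)^2] = \EE\!\left[2\cS_t^N\sum_{i=1}^N\Bigl(\tfrac{1-\cA_t^{i,N}}{2}\Delta_i^+ + \tfrac{1+\cA_t^{i,N}}{2}\Delta_i^-\Bigr) + \sum_{i=1}^N\Bigl(\tfrac{1-\cA_t^{i,N}}{2}(\Delta_i^+)^2 + \tfrac{1+\cA_t^{i,N}}{2}(\Delta_i^-)^2\Bigr)\right].
\]

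The next step is to evaluate the two sums separately. A short computation gives the conditional expected jump of each coordinate, $\tfrac{1-\cA^i}{2}\Delta_i^+ + \tfrac{1+\cA^i}{2}\Delta_i^- = -\mu(X^i + \cA^i)$, and the crucial algebraic identity here is $\sum_{i=1}^N \cA_t^{i,N} = \cS_t^N$ (each $X^j$ is counted $N-1$ times and then divided by $N-1$), so that $\sum_{i=1}^N(X^i + \cA^i) = 2\cS_t^N$. The first (drift) sum therefore collapses to the clean term $2\cS_t^N\cdot(-2\mu\,\cS_t^N) = -4\mu(\cS_t^N)^2$, which is precisely what produces the $\expo^{-4\mu t}$ decay. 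For the second (fluctuation) sum I would use only that opinions stay in $[-1,1]$: since $(\Delta_i^+)^2 = \mu^2(1-X^i)^2 \leq 4\mu^2$ and $(\Delta_i^-)^2 = \mu^2(1+X^i)^2 \leq 4\mu^2$, and the two weights form a probability vector, each summand is a convex combination of quantities bounded by $4\mu^2$, so the entire sum is at most $4\mu^2 N$.

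Combining the two estimates gives the differential inequality $\frac{\dd}{\dd t}\EE[(\cS_t^N)^2] \leq -4\mu\,\EE[(\cS_t^N)^2] + 4\mu^2 N$, and Grönwall's lemma (integrating factor $\expo^{4\mu t}$) yields $\EE[(\cS_t^N)^2] \leq \EE[(\cS_0^N)^2]\,\expo^{-4\mu t} + \mu N(1-\expo^{-4\mu t})$, which is the claimed bound with the admissible choice $C = \mu$.

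I expect the only genuinely delicate point to be the drift computation: one must verify that the cross terms involving $\cA^i$ cancel correctly and that the counting identity $\sum_i \cA_t^{i,N} = \cS_t^N$ is applied properly, since it is exactly this cancellation that turns what might otherwise have been an unfavorable growth rate into the desired contraction rate $-4\mu$. The fluctuation bound, by contrast, is robust and requires only the a priori confinement of opinions to $[-1,1]$; no finer information about correlations between agents is needed, which is what keeps the additive error term linear in $N$.
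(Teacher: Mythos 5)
Your proof is correct and takes essentially the same route as the paper's: both differentiate $\EE[(\cS_t^N)^2]$ through the jump structure, use the counting identity $\sum_{i}\cA_t^{i,N}=\cS_t^N$ to produce the drift $-4\mu(\cS_t^N)^2$, bound the remaining jump terms by $O(N)$ using only confinement to $[-1,1]$, and conclude by Grönwall. The only difference is organizational --- you write the increment as $2\cS\Delta+\Delta^2$ directly rather than isolating the $i$-th coordinate in the expansion of $\cS^2$ --- which incidentally gives a slightly sharper constant.
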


\begin{proof}
For notational simplicity, we drop the $N$ in the superscript and simply write $\cS_t$ and $X_t^i$ instead of $\cS_t^N$ and $X_t^{i,N}$, respectively. Denote $h(t) \coloneqq \EE[\cS_t^2]$ to be the second (raw) moment of $\cS_t$. Setting aside the terms involving $X_t^i$ in the expansion of $\cS_t^2$, we have
\[\cS_t^2 = (X_t^i)^2 + 2\,X_t^i\, \sum_{\substack{j=1 \\ j\neq i}}^N X_t^j + \sum_{\substack{j,k=1 \\ j,k\neq i}}^N X_t^j\,X_t^k.\]
Notice that when the opinion of agent $i$ jumps, the last summation remains unchanged, whence it will cancel with the pre-jump summation. Consequently, again from \eqref{eq:SDE_Xi}, we obtain
\begin{align}
	\dd h(t)
	&= \EE \left[ \sum_{i=1}^N \int_0^1 \left( \cS_t^2 - \cS_{t^-}^2 \right) \cP^i(\dd t, \dd u) \right]
	\notag \\
	&= \EE \left[ \sum_{i=1}^N \int_0^1 \left(
	(X_t^i)^2 - (X_{t^-}^i)^2 + 2 (X_t^i - X_{t^-}^i) \sum_{j\neq i} X_{t^-}^j
	\right) \cP^i(\dd t, \dd u) \right].
	\label{eq:dES2}
\end{align}
Clearly we have
\begin{align*}
	X_t^i - X_{t^-}^i
	&= \mu(1-X_{t^-}^i) \ind \left\{ u < \tfrac{1-\cA_{t^-}^i}{2} \right\} - \mu(1+X_{t^-}^i) \ind \left\{ u \geq \tfrac{1-\cA_{t^-}^i}{2} \right\}
	\notag \\
	&= - \mu X_{t^-}^i + \mu \left[ \ind \left\{ u < \tfrac{1-\cA_{t^-}^i}{2} \right\}  -  \ind \left\{ u \geq \tfrac{1-\cA_{t^-}^i}{2} \right\} \right].
\end{align*}
Notice that the integral of the difference of indicators with respect to $\dd u$ gives $- \cA_{t^-}^i$. Since the intensity of $\cP^i(\dd t,\dd u)$ is $\dd t\, \dd u$, from \eqref{eq:dES2}, using that $(X_t^i)^2 - (X_{t^-}^i)^2 \leq 1$, we deduce that
\begin{align*}
	\frac{\dd}{\dd t} h(t)
	&\leq \EE \left[
	\sum_{i=1}^N \left(
	1 - 2\, \mu\,(X_{t}^i + \cA_{t}^i)\, \sum_{j\neq i} X_t^j
	\right)
	\right] \\
	&= \EE \left[
	\sum_{i=1}^N \left(
	1 - 2\,\mu\,(X_{t}^i + \cA_{t}^i)\, (\cS_t - X_t^i)
	\right)
	\right] \\
	&\leq \EE [ 5N - 4\,\mu\, \cS_t^2],
\end{align*}
where we have used that $\sum_i X_t^i = \cS_t = \sum_i \cA_t^i$ and $(X_t^i+\cA_t^i)X_t^i \leq 2$. Thus, we end up with the differential inequality $\frac{\dd}{\dd t} h(t) \leq 5N - 4\,\mu\, h(t)$, and the advertised bound follows readily from Grönwall's inequality.
\end{proof}

\begin{corollary}
	\label{cor:EAm}
Assume that $\lambda = \mu$. Then, there exists a constant $C = C(\mu)$ depending only on $\mu$ such that for all $1\leq i\leq N$ and $t\geq 0$,
	\[
	\EE[|\cA_t^{i,N} - m_t|]^2
	\leq
	\EE[(\cA_t^{i,N} - m_t)^2]
	\leq \frac{C}{N}.
	\]
\end{corollary}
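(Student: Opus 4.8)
The plan is to reduce the claim to the second-moment bound on $\cS_t^N$ furnished by Lemma~\ref{lem:ES2}, exploiting a cancellation that annihilates the leading $O(N^2)$ contribution. The first inequality $\EE[|\cA_t^{i,N}-m_t|]^2 \le \EE[(\cA_t^{i,N}-m_t)^2]$ is immediate from Jensen's inequality (equivalently, Cauchy--Schwarz), so the entire content lies in the second inequality.

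For that bound I would first write $\cA_t^{i,N} = \frac{1}{N-1}(\cS_t^N - X_t^{i,N})$, and, recalling from the Proposition that $\EE[X_t^{i,N}] = m_t$ and hence $\EE[\cS_t^N] = N m_t$, decompose
\[
\cA_t^{i,N} - m_t = \frac{1}{N-1}\Big[(\cS_t^N - N m_t) - (X_t^{i,N} - m_t)\Big].
\]
Applying $(a-b)^2 \le 2a^2 + 2b^2$ and taking expectations gives
\[
\EE[(\cA_t^{i,N} - m_t)^2] \le \frac{2}{(N-1)^2}\Big(\Var(\cS_t^N) + \EE[(X_t^{i,N} - m_t)^2]\Big),
\]
where the second term is at most $4$ since $X_t^{i,N}, m_t \in [-1,1]$, and thus contributes only $O(1/N^2)$.

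The heart of the argument is to show $\Var(\cS_t^N) = O(N)$. Here I would use $\Var(\cS_t^N) = \EE[(\cS_t^N)^2] - N^2 m_t^2$, bound $\EE[(\cS_t^N)^2]$ by Lemma~\ref{lem:ES2}, and compute $\EE[(\cS_0^N)^2] = N^2 m_0^2 + N\sigma_0^2$ from the fact that the initial opinions are i.i.d. $\rho_0$-distributed, writing $\sigma_0^2 := \Var(X_0^{i,N}) \le 1$. Substituting $m_t = m_0 \expo^{-2\mu t}$, so that $N^2 m_t^2 = N^2 m_0^2 \expo^{-4\mu t}$, yields
\[
\Var(\cS_t^N) \le \big(N^2 m_0^2 + N\sigma_0^2\big)\expo^{-4\mu t} + CN - N^2 m_0^2 \expo^{-4\mu t} = N\sigma_0^2 \expo^{-4\mu t} + CN \le (1+C)N.
\]
Dividing by $(N-1)^2$ then delivers the claimed $C/N$ bound, absorbing constants and using $(N-1)^2 \ge N^2/4$ for $N \ge 2$.

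The step I expect to matter most is this exact cancellation of the $O(N^2)$ terms: Lemma~\ref{lem:ES2} only controls $\EE[(\cS_t^N)^2]$ up to an $O(N^2)$ quantity $\EE[(\cS_0^N)^2]\expo^{-4\mu t} \sim N^2 m_0^2 \expo^{-4\mu t}$, so a naive estimate that forgets to pass to the variance would give $O(1)$ rather than the required $O(1/N)$. The decay rate $\expo^{-4\mu t}$ appearing in the Lemma is precisely the one needed for this term to be cancelled by $N^2 m_t^2$, and this matching of rates --- traceable to the explicit formula $m_t = m_0\expo^{-2\mu t}$ --- is the one point I would verify with care.
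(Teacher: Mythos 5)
Your proof is correct and follows essentially the same route as the paper: both arguments hinge on Lemma~\ref{lem:ES2} and on passing to the variance of $\cS_t^N$ so that the $O(N^2)$ term $N^2 m_0^2 \expo^{-4\mu t}$ is exactly cancelled by $N^2 m_t^2$, leaving only the $O(N)$ contribution from the i.i.d.\ initial data. The only cosmetic difference is that the paper reduces to the full average $\cS_t^N/N$ and leaves the comparison with $\cA_t^{i,N}$ implicit, whereas you handle the excluded index $i$ explicitly via the decomposition $\cA_t^{i,N}-m_t=\tfrac{1}{N-1}[(\cS_t^N-Nm_t)-(X_t^{i,N}-m_t)]$, which is if anything slightly more careful.
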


\newcommand{\cV}{\mathcal{U}}
\begin{proof}
It suffices to prove that $\EE[(\cV_t^N-m_t)^2] \leq C/N$ for $\cV_t^N \coloneqq \frac{1}{N} \cS_t^N$. Since $\EE[\cV_t^N] = m_t = m_0\, \expo^{-2\mu t}$, invoking Lemma \ref{lem:ES2} after dividing by $N^2$, we obtain:
\begin{align*}
	\EE[(\cV_t^N - m_t)^2]
	&= \EE[(\cV_t^N)^2] - m_t^2 \\
	&\leq \EE[(\cV_0^N)^2]\, \expo^{-4\mu t} + \frac{C}{N} - m_0^2\, \expo^{-4\mu t} \\
	&\leq \EE[(\cV_0^N)^2 - m_0^2] + \frac{C}{N} \\
	&= \operatorname{Var}(\cV_0^N) + \frac{C}{N}.
\end{align*}
As $(X_0^{i,N})_{i=1}^N$ are i.i.d.\ with law $\rho_0$, we have $\operatorname{Var}(\cV_0^N) = \operatorname{Var}(\rho_0)/N$, which concludes the proof.
\end{proof}

We are now ready to prove the following uniform in time propagation of chaos result for our agent-based opinion dynamics when $\lambda = \mu$, which refines the finite in time propagation of chaos guarantee reported in Theorem \ref{thm:PoC}.

\begin{theorem}[Uniform propagation of chaos when $\lambda = \mu$]
	\label{thm:UPoC}
	Assume that $\lambda = \mu$. Then, for all $p \geq 1$, there exists a constant $C = C(\mu,p)$ depending only $\mu$ and $p$ such that for all $1\leq i\leq N$ and $t\geq 0$,
	\[
	W^p_p(\law(\bX_t^N), \rho_t^{\otimes N})
	\leq \frac{C}{\sqrt{N}}.
	\]
\end{theorem}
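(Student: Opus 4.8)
The plan is to establish a \emph{synchronous coupling} between the interacting system $\bX_t^N$ and a family of independent copies of the mean-field limit, and then to show that the pairwise discrepancy stays of order $N^{-1/2}$ \emph{uniformly in time}, thanks to a contractive drift that is invisible to the crude Grönwall argument behind Theorem~\ref{thm:PoC}. First I would construct, on the same probability space, processes $(Z_t^i)_{i=1}^N$ solving the nonlinear SDE \eqref{eq:SDE_linear} driven by the \emph{same} Poisson measures $\cP^i$ as in \eqref{eq:SDE_Xi}, with coupled initial data $Z_0^i = X_0^{i,N}$. Since the $(X_0^{i,N})_i$ are i.i.d.\ $\rho_0$ and the $(\cP^i)_i$ are independent, the $(Z_t^i)_i$ are i.i.d.\ with common law $\rho_t$, so that $\law(Z_t^1,\dots,Z_t^N) = \rho_t^{\otimes N}$. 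By the definition of the normalized-distance Wasserstein metric together with exchangeability,
\[
W_p^p(\law(\bX_t^N),\rho_t^{\otimes N}) \leq \EE\!\left[\frac{1}{N}\sum_{i=1}^N |X_t^{i,N}-Z_t^i|^p\right] = \EE[|X_t^{1,N}-Z_t^1|^p],
\]
so it suffices to bound a single pairwise discrepancy $D_t^i \coloneqq X_t^{i,N}-Z_t^i$.

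Next I would track the evolution of $\EE[|D_t^i|]$. Writing $a \coloneqq \tfrac{1-\cA_{t^-}^{i,N}}{2}$ and $b \coloneqq \tfrac{1-m_t}{2}$, a point $(t,u)$ of $\cP^i$ makes $X^{i,N}$ and $Z^i$ jump in the \emph{same} direction precisely when $u < \min(a,b)$ or $u \geq \max(a,b)$ (total $u$-measure $1-|a-b|$), and on this \emph{agreement} region both updates contract the gap by the factor $1-\mu$, since $x \mapsto x \pm \mu(1\mp x)$ is affine with slope $1-\mu$. The indicators disagree only on the \emph{bad} set of measure $|a-b| = \tfrac12|\cA_{t^-}^{i,N}-m_t|$, where $|D|$ is crudely bounded by $2$. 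Computing the generator of $|D_t^i|$ by integrating the post-jump value over $u\in[0,1]$ then yields
\[
\frac{\dd}{\dd t}\EE[|D_t^i|] \leq -\mu\,\EE[|D_t^i|] + \EE[|\cA_t^{i,N}-m_t|],
\]
in which the strictly negative drift $-\mu\,\EE[|D_t^i|]$ is exactly the mechanism that suppresses the exponential-in-time growth of Theorem~\ref{thm:PoC}. Invoking Corollary~\ref{cor:EAm} to bound the source term by $\sqrt{C/N}$ and applying Grönwall's inequality (with $D_0^i=0$) gives $\EE[|D_t^i|]\leq C/(\mu\sqrt N)$ for all $t\geq0$.

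Finally, to pass from $p=1$ to general $p$, I would use that $|D_t^i|\leq 2$, whence $|D_t^i|^p \leq 2^{p-1}|D_t^i|$ and therefore $\EE[|D_t^i|^p]\leq 2^{p-1}C/(\mu\sqrt N)$; combined with the reduction above this closes the proof with $C(\mu,p)=2^{p-1}C/\mu$. I expect the main obstacle to be the careful bookkeeping of the coupled jumps—specifically, verifying that the disagreement region has $u$-measure exactly $\tfrac12|\cA_t^{i,N}-m_t|$ and that the agreement region is genuinely contractive for \emph{every} configuration—since the time-uniformity hinges entirely on extracting the clean contraction constant $1-\mu<1$, and one must check the drift inequality survives the crude $O(1)$ estimate on the bad set.
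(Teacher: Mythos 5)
Your proposal is correct and follows essentially the same route as the paper: a synchronous coupling driven by the shared Poisson measures $\cP^i$ with $\mathbf{Z}_0 = \bX_0$, the reduction $W_p^p \leq \EE[|X_t^{i,N}-Z_t^i|^p]$ by exchangeability, the agreement/disagreement decomposition of the $u$-interval giving the contraction factor $1-\mu$ plus an error of size $\EE[|\cA_t^{i,N}-m_t|]$, then Corollary~\ref{cor:EAm} and Grönwall. The only (harmless) cosmetic difference is that the paper runs the drift computation directly at the level of $p$-th moments, obtaining the rate $\alpha = 1-(1-\mu)^p$, whereas you treat $p=1$ and lift to general $p$ via the crude bound $|D_t^i|^p \leq 2^{p-1}|D_t^i|$, which is equally valid since both processes live in $[-1,1]$.
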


\begin{proof}
For the sake of notational simplicity, we drop the $N$ in the superscript again. We resort to a standard coupling argument inspired from several recent works  \cite{cao_derivation_2021,cortez_quantitative_2016,cortez_uniform_2016}, which couples the system $\bX = (X^1,\ldots,X^N)$ with a system $\mathbf{Z} = (Z^1,\ldots,Z^N)$ of independent mean-field processes. To be more precise, each $X^i$ is governed by the SDE \eqref{eq:SDE_Xi}, whereas $Z^i$ solves the mean-field SDE \eqref{eq:SDE_linear} with $\cP^i$ in place of $\cP$. That is:
	\[
		\dd X_t^i
		= \int_0^1 \left[\mu (1-X_{t^-}^i) \mathbbm{1} \left\{ u < \tfrac{1-\cA_{t^-}^i}{2} \right\}
		- \mu (1+X_{t^-}^i) \mathbbm{1} \left\{ u \geq \tfrac{1-\cA_{t^-}^i}{2} \right\}
		\right] \cP^i(\dd t, \dd u),
	\]
	and
	\[
	\dd Z_t^i
	= \int_0^1 \left[\mu (1-Z_{t^-}^i) \mathbbm{1} \left\{ u < \tfrac{1-m_t}{2} \right\}
	- \mu (1+Z_{t^-}^i) \mathbbm{1} \left\{ u \geq \tfrac{1-m_t}{2} \right\}
	\right] \cP^i(\dd t, \dd u).
	\]
	Recall that $(\cP^i)_{i=1}^N$ are independent Poisson point measures on $[0,\infty) \times [0,1]$ with intensity $\dd t \, \dd u$, and that $X_0^1,\ldots,X_0^N$ are i.i.d.\ and $\rho_0$-distributed. We set $\mathbf{Z}_0 = \bX_0$ to ensure that $Z^1,\ldots,Z^N$ are independent mean-field processes.
	
	For arbitrary but fixed $i\in\{1,\ldots,N\}$, let $h(t) \coloneqq \EE[ |X_t^i - Z_t^i|^p]$. Since $W_p$ is a coupling distance, we have
	\[
		W^p_p(\law(\bX_t^N), \rho_t^{\otimes N})
		\leq \EE\left[\frac{1}{N} \sum_{j=1} |X_t^j - Z_t^j|^p \right]
		= h(t),
	\]
	due to exchangeability. Thus, it suffices to bound $h(t)$ from above.
	
	Let $r_t^i \coloneqq \min(m_t, \cA_t^i)$ and $R_t^i \coloneqq \max(m_t, \cA_t^i)$, thus both $X^i$ and $Z^i$ jump towards $+1$ when $u<(1-R_{t^-}^i)/2$, and both jump towards $-1$ when $u\geq (1-r_{t^-}^i)/2$. Otherwise, one of them jumps towards $+1$ while the other jumps towards $-1$, and we can simply upper bound the change of the value of $|X_t^i - Z_t^i|$ after a single jump by $2$. From the SDEs that define $X^i$ and $Z^i$, we thus deduce that
    \begin{equation}
    \label{eq:chain_of_estimates}
	\begin{aligned}
		\frac{\dd}{\dd t}h(t) + h(t)
		&\leq \EE \left[
		\left|X_t^i + \mu(1-X_t^i) - Z_t^i - \mu(1-Z_t^i) \right|^p \tfrac{1-R_t^i}{2} \right. \\
		& \left. \qquad {}
		+ \left|X_t^i - \mu(1+X_t^i) - Z_t^i + \mu(1+Z_t^i) \right|^p \tfrac{1+r_t^i}{2}
		+ 2^p \tfrac{R_t^i-r_t^i}{2}
		\right] \\
		&= \EE \left[
		(1-\mu)^p |X_t^i - Z_t^i|^p \left( 1 - \tfrac{R_t^i-r_t^i}{2} \right) + 2^{p-1} (R_t^i - r_t^i)
		\right] \\
		&\leq (1-\mu)^p h(t) + 2^{p-1} \EE[ |\cA_t^i - m_t|],
	\end{aligned}
    \end{equation}
where we used that $R_t^i - r_t^i = |\cA_t^i - m_t|$. Thanks to Corollary \ref{cor:EAm}, we know that $\EE[ |\cA_t^i - m_t|] \leq C/\sqrt{N}$, which leads us to
	\[
		\frac{\dd}{\dd t}h(t)
		\leq - \alpha h(t) + \frac{C}{\sqrt{N}}.
	\]
	for $\alpha = 1-(1-\mu)^p > 0$. As $\mathbf{Z}_0 = \bX_0$, we have $h(0) = 0$ and the conclusion follows readily from Grönwall's inequality.
\end{proof}

\begin{remark}
The uniform propagation of chaos guarantee reported in Theorem \ref{thm:UPoC} is a significant refinement of the previous finite in time result stated in Theorem \ref{thm:PoC}, and it justifies the use of a simplified mean-field PDE dynamics as a good approximation of the underlying stochastic agent-based dynamics even when time is very large.
\end{remark}

%%%%%%%%%%%%%%%%%%%%%%%%%%%%%%%%%%%%%%%%%%%%%%%%%%%%%%%%%%%%%%%%%%%%%%
\subsection{Long-time behavior of the finite system when $\lambda = \mu$}\label{subsec:2.3}

To conclude this section, we provide some estimates for the long-time behavior of the finite system. Ideally, one would want to prove that $\bX_t^N$ converges to a unique stationary distribution as $t\to\infty$ for each fixed $N$. Unfortunately, we were not quite able to prove it. However, the following result can be viewed as a first step towards that direction:

\newcommand{\tX}{\tilde{X}}
\newcommand{\tbX}{\tilde{\mathbf{X}}}
\newcommand{\tcA}{\tilde{\mathcal{A}}}
\begin{proposition}
Assume that $\lambda = \mu$. Denote $\tbX^N = (\tX^{1,N}, \ldots, \tX^{N,N})$ the solution of the system of SDEs \eqref{eq:SDE_Xi} with the same Poisson point measures $(\cP^i)_{i=1}^N$, starting with initial condition $\tbX_0^N \sim \tilde{\rho}_0^{\otimes N}$ for some distribution $\tilde{\rho}_0 \in \mathcal{P}([-1,1])$, possibly distinct from $\rho_0$. Then, for any $1\leq i\leq N$ it holds that
	\begin{itemize}
		\item[(i)] $\EE[|X_t^{i,N} - \tX_t^{i,N}|]$ is non-increasing with respect to $t$.
		
		\item[(ii)] There exists some constant $C=C(\mu)$ depending only on $\mu$ such that
		\[
			\lim_{t\to\infty} \EE[|X_t^{i,N} - \tX_t^{i,N}|] \leq \frac{C}{\sqrt{N}}.
		\]
	\end{itemize}
\end{proposition}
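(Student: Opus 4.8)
The plan is to run a synchronous coupling of the two finite systems, exactly as in the proof of Theorem~\ref{thm:UPoC}, but now comparing $\bX^N$ with $\tbX^N$ rather than with a system of independent mean-field processes. Since both systems are driven by the \emph{same} family $(\cP^i)_{i=1}^N$, the difference $Y_t^i \coloneqq X_t^{i,N} - \tX_t^{i,N}$ changes only when $\cP^i$ fires. Writing $h(t) \coloneqq \EE[|Y_t^i|]$ and $\delta_t^i \coloneqq |\cA_t^{i,N} - \tcA_t^{i,N}|$, I would classify the firing of $\cP^i$ according to $r_t^i \coloneqq \min(\cA_t^{i,N}, \tcA_t^{i,N})$ and $R_t^i \coloneqq \max(\cA_t^{i,N}, \tcA_t^{i,N})$: both coordinates jump toward the same extreme when $u < (1-R_{t^-}^i)/2$ or $u \geq (1-r_{t^-}^i)/2$, in which case $Y^i$ is contracted exactly to $(1-\mu)Y^i$; on the intermediate set of length $\delta_{t^-}^i/2$ they jump in opposite directions, so the post-jump value of $Y^i$ is $(1-\mu)Y^i \pm 2\mu$.

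The crucial step is the drift computation for $h$. Integrating the jump contributions against the intensity $\dd u$ and, on the opposite-jump set, invoking only the triangle inequality $|(1-\mu)Y^i \pm 2\mu| \leq (1-\mu)|Y^i| + 2\mu$ (rather than the cruder bound by $2$ used in Theorem~\ref{thm:UPoC}), I expect the two cross terms $\pm \mu\, |Y^i|\,\delta^i/2$ to cancel, leaving the clean inequality
\[
\frac{\dd}{\dd t} h(t) \leq -\mu\, h(t) + \mu\, \EE[\delta_t^i].
\]
This cancellation is the heart of the argument and the step I would verify most carefully; it is precisely what upgrades the crude $C/\sqrt N$ bound into genuine monotonicity, and it is why the sharper triangle-inequality bound (as opposed to bounding the opposite-jump increment by $2$) is essential.

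For part~(i), I would sum the drift inequality over $i$: by the triangle inequality and exchangeability, $\sum_i \EE[\delta_t^i] \le \frac{1}{N-1}\sum_i\sum_{j\ne i}\EE[|Y_t^j|] = \sum_j\EE[|Y_t^j|]$, whence
\[
\frac{\dd}{\dd t}\sum_i\EE[|Y_t^i|] \le -\mu\sum_i\EE[|Y_t^i|] + \mu\sum_i\EE[\delta_t^i] \le 0.
\]
Since $\EE[|Y_t^i|] = h(t)$ for every $i$ by exchangeability, this shows that $h$ is non-increasing.

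For part~(ii), I would instead estimate $\EE[\delta_t^i]$ using Corollary~\ref{cor:EAm}, applied to both systems (the corollary holds verbatim for $\tbX^N$, with its own mean $\tilde m_t = \tilde m_0\,\expo^{-2\mu t}$). Inserting the deterministic means and using the triangle inequality gives
\[
\EE[\delta_t^i] \leq \EE[|\cA_t^{i,N} - m_t|] + |m_t - \tilde m_t| + \EE[|\tcA_t^{i,N} - \tilde m_t|] \leq \frac{2C}{\sqrt N} + |m_0 - \tilde m_0|\,\expo^{-2\mu t}.
\]
Feeding this into $\frac{\dd}{\dd t}h \leq -\mu h + \mu\,\EE[\delta_t^i]$ and solving the resulting linear differential inequality by Grönwall, the initial term and the $\expo^{-2\mu t}$ contribution both vanish as $t\to\infty$, leaving $\limsup_{t\to\infty} h(t) \leq 2C/\sqrt N$, which is the claimed bound. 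The main obstacle throughout is the drift computation and its cancellation in the second paragraph; parts~(i) and~(ii) then diverge only in how $\EE[\delta_t^i]$ is controlled — by $h(t)$ itself for the monotonicity, and uniformly by $C/\sqrt N$ (up to a decaying term) for the asymptotic estimate.
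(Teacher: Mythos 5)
Your proposal is correct and follows essentially the same route as the paper's proof: the same synchronous coupling via the shared Poisson measures, the same decomposition into same-direction and opposite-direction jump events with the triangle-inequality bound $(1-\mu)|Y^i|+2\mu$ on the latter, the resulting cancellation yielding $\frac{\dd}{\dd t}h \leq -\mu h + \mu\,\EE[|\cA_t^i-\tcA_t^i|]$, and then the two ways of controlling $\EE[|\cA_t^i-\tcA_t^i|]$ (by $h(t)$ itself via exchangeability for (i), and by Corollary~\ref{cor:EAm} plus the decaying mean difference for (ii)). The only cosmetic difference is that you establish monotonicity by summing over $i$, whereas the paper bounds $\EE[|\cA_t^i-\tcA_t^i|]\le h(t)$ directly by exchangeability.
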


\begin{proof}
We drop the $N$ in the superscripts as usual. Denote $\tcA_t^i \coloneqq \frac{1}{N-1} \sum_{j\neq i} \tX_t^j$. Let $r_t^i \coloneqq  \min(\cA_t^i, \tcA_t^i)$ and $R_t^i \coloneqq  \max(\cA_t^i, \tcA_t^i)$. Examining \eqref{eq:SDE_Xi}, we see that when $u<(1-R_{t^-}^i)/2$, both $X^i$ and $\tX^i$ jump towards $+1$, and when $u\geq (1-r_{t^-}^i)/2$, both jump towards $-1$. Otherwise, $X^i$ will jump upwards and $\tX^i$ downwards, or vice versa. In the first case, we have:
\begin{align*}
	|X_t^i - \tX_t^i|
	&= |X^i_{t^-} + \mu (1-X_{t^-}^i) - \tX_{t^-}^i + \mu (1+\tX_{t^-}^i)| \\
	&\leq 2\mu + (1-\mu)|X_{t^-}^i - \tX_{t^-}^i|.
\end{align*}
In the second case, the same inequality holds. Thus, with a computation similar as the one presented in the proof of Theorem \ref{thm:UPoC}, we have for $h(t) \coloneqq \EE[|X_t^i - \tX_t^i|]$ that
\begin{align}
	\frac{\dd}{\dd t}h(t) + h(t)
	&\leq \EE \left[
	(1-\mu) |X_t^i - \tX_t^i| \left( 1 - \tfrac{R_t^i-r_t^i}{2} \right) + (2\mu + (1-\mu)|X_t^i - \tX_t^i|) \tfrac{R_t^i-r_t^i}{2}
	\right]
	\notag \\
	&= (1-\mu)\,h(t) + \mu\,\EE[ |\cA_t^i - \tcA_t^i|].
	\label{eq:dEAtildeA}
\end{align}
Clearly $\EE[ |\cA_t^i - \tcA_t^i|] \leq \frac{1}{N-1} \sum_{j\neq i} \EE[|X_t^j - \tX_t^j|] = h(t)$, which then yields $\frac{\dd}{\dd t}h(t) \leq 0$, proving that $h(t)$ is non-increasing.

Now we prove the second assertion: denoting $\tilde{m}_t = \EE[\tX_t^i] = \EE[\tcA_t^i] = \tilde{m}_0\, \expo^{-2\mu t}$, we have
\begin{align*}
	\EE[ |\cA_t^i - \tcA_t^i|]
	&\leq \EE[ |\cA_t^i - m_t|] + \EE[ |\tcA_t^i - \tilde{m}_t|] + |m_t - \tilde{m}_t| \\
	&\leq \frac{C}{\sqrt{N}} + |m_0-\tilde{m}_0|\,\expo^{-2\mu t},
\end{align*}
where we have used the content of Corollary \ref{cor:EAm}. From \eqref{eq:dEAtildeA}, we thus obtain
\[
	\frac{\dd}{\dd t}h(t) \leq -\mu h(t) + \frac{C}{\sqrt{N}} + |m_0-\tilde{m}_0|\expo^{-2\mu t}.
\]
The second assertion follows after applying Grönwall's inequality and taking the large time limit as $t\to\infty$.
\end{proof}

%%%%%%%%%%%%%%%%%%%%%%%%%%%%%%%%%%%%%%%%%%%%%%%%%%%%%%%%%%%%%%%%%%%%%%
%%%%%%%%%%%%%%%%%%%%%%%%%%%%%%%%%%%%%%%%%%%%%%%%%%%%%%%%%%%%%%%%%%%%%%
\section{Convergence to equilibrium for the mean-field PDE}
\label{sec:sec3}
\setcounter{equation}{0}
%%%%%%%%%%%%%%%%%%%%%%%%%%%%%%%%%%%%%%%%%%%%%%%%%%%%%%%%%%%%%%%%%%%%%%
%%%%%%%%%%%%%%%%%%%%%%%%%%%%%%%%%%%%%%%%%%%%%%%%%%%%%%%%%%%%%%%%%%%%%%

We now turn to the asymptotic analysis of the solution $\rho_t$ to \eqref{eq:PDE} as $t \to \infty$. For notational simplicity, we write $\mathcal{L}(X) = \rho$ to mean that the law of a real-valued random variable $X$ is $\rho$. We start with the following elementary yet important observation, which unveils a probabilistic interpretation of the collision gain operator $Q_+$ \eqref{eq:operator_Q+}.

\begin{lemma}\label{lem:Q_+}
Assume that $\mathcal{L}(Z) = \rho \in \mathcal{P}([-1,1])$ and $\mathcal{B} \sim \textrm{Bernoulli}\left(\frac{1-m}{2}\right)$ is a Bernoulli coin independent of $Z$, where $m$ is the mean of the law $\rho$. Then
\begin{equation*}
	%\label{eq:law_of_Q+}
	Q_+[\rho] = \mathcal{L}\left(\mu\,\mathcal{B} - \lambda\,(1-\mathcal{B}) + Z\,\left[(1-\mu)\,\mathcal{B} + (1-\lambda)\,(1-\mathcal{B})\right]\right).
\end{equation*}
In particular, when $\lambda = \mu$, we have
\begin{equation}\label{eq:law_of_Q+_equal_mu}
Q_+[\rho] = \mathcal{L}\left(\mu\,(2\,\mathcal{B} - 1) + (1-\mu)\,Z\right).
\end{equation}
\end{lemma}

The proof of Lemma \ref{lem:Q_+} consists of straightforward computations and hence will be omitted. In order to establish convergence to equilibrium associated to the solution of the Boltzmann-type equation \eqref{eq:PDE}, a crucial ingredient relies on the contractivity of the collision gain operator $Q_+$ in a suitable metric. However, since the mean-field dynamics \eqref{eq:PDE} does not preserve the mean value of the solution, which in the language of statistical physics literature implies that the dynamics is not strictly conservative, we only managed to establish some weak contractivity properities of the operator $Q_+$. Nevertheless, these weak contractivity results are already sufficient to deduce quantitative convergence guarantees for the solution of \eqref{eq:PDE} to its equilibrium distribution. We denote by $W_p(\cdot,\cdot)$ the $p$-Wasserstein distance between probability measures on $[-1,1]$.

\begin{proposition}[Weak contractivity of $Q_+$ in $W_1$ when $\lambda = \mu$]\label{prop:contra_W1_equal_mu}
Assume that $\lambda = \mu \in (0,1)$. Suppose that $Z^{(i)} \sim \rho^{(i)} \in \mathcal{P}([-1,1])$ with mean value $m^{(i)}$ for $i=1,2$, then
\begin{equation}\label{eq:W1_contra_equal_mu}
W_1\left(Q_+[\rho^{(1)}], Q_+[\rho^{(2)}]\right) \leq \mu\,|m^{(1)} - m^{(2)}| + (1-\mu)\,W_1(\rho^{(1)},\rho^{(2)}).
\end{equation}
Consequently, let $(\rho^{(1)}_t)_{t \geq 0}$ and $(\rho^{(2)}_t)_{t \geq 0}$ be the solutions to \eqref{eq:PDE} corresponding to initial datum $\rho^{(1)}_0$ and $\rho^{(2)}_0$ with mean values $m^{(1)}_0$ and $m^{(2)}_0$, respectively. Then for $t\geq 0$ it holds that
\begin{equation}\label{eq:W1_contra_equal_mu_PDE}
W_1\left(Q_+[\rho^{(1)}_t], Q_+[\rho^{(2)}_t]\right) \leq \mu\,|m^{(1)}_0 - m^{(2)}_0|\,\expo^{-2\,\mu\,t} + (1-\mu)\,W_1(\rho^{(1)}_t,\rho^{(2)}_t).
\end{equation}
\end{proposition}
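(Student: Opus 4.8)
The plan is to exploit the probabilistic representation of $Q_+$ furnished by Lemma~\ref{lem:Q_+}, together with the fact that $W_1$ is defined as an infimum over couplings, so that exhibiting a single well-chosen coupling immediately yields the desired upper bound. Specializing \eqref{eq:law_of_Q+_equal_mu} to the case $\mu_+=\mu_-=\mu$, for each $i=1,2$ the measure $Q_+[\rho^{(i)}]$ is the law of $\mu\,(2\,\mathcal{B}^{(i)}-1)+(1-\mu)\,Z^{(i)}$, where $Z^{(i)}\sim\rho^{(i)}$ and $\mathcal{B}^{(i)}\sim\textrm{Bernoulli}\big(\tfrac{1-m^{(i)}}{2}\big)$ is independent of $Z^{(i)}$.

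The coupling I would build has two independent pieces. First, take $(Z^{(1)},Z^{(2)})$ to be an optimal coupling for $W_1(\rho^{(1)},\rho^{(2)})$, so that $\EE[|Z^{(1)}-Z^{(2)}|]=W_1(\rho^{(1)},\rho^{(2)})$. Second, take $(\mathcal{B}^{(1)},\mathcal{B}^{(2)})$ to be the maximal coupling of the two Bernoulli variables, chosen independently of the pair $(Z^{(1)},Z^{(2)})$; this independence guarantees that each $\mathcal{B}^{(i)}$ is independent of its own $Z^{(i)}$, as required by Lemma~\ref{lem:Q_+}, while simultaneously making $\mathbb{P}(\mathcal{B}^{(1)}\neq\mathcal{B}^{(2)})$ as small as possible. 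Writing $W^{(i)}:=\mu\,(2\,\mathcal{B}^{(i)}-1)+(1-\mu)\,Z^{(i)}\sim Q_+[\rho^{(i)}]$, the constant terms cancel and the triangle inequality gives
\[
|W^{(1)}-W^{(2)}| \leq 2\mu\,\big|\mathcal{B}^{(1)}-\mathcal{B}^{(2)}\big| + (1-\mu)\,|Z^{(1)}-Z^{(2)}|.
\]
Since $\mathcal{B}^{(i)}\in\{0,1\}$, one has $\EE\big[|\mathcal{B}^{(1)}-\mathcal{B}^{(2)}|\big]=\mathbb{P}(\mathcal{B}^{(1)}\neq\mathcal{B}^{(2)})=\big|\tfrac{1-m^{(1)}}{2}-\tfrac{1-m^{(2)}}{2}\big|=\tfrac12|m^{(1)}-m^{(2)}|$ for the maximal coupling. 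Taking expectations and using that $W_1\big(Q_+[\rho^{(1)}],Q_+[\rho^{(2)}]\big)\leq\EE[|W^{(1)}-W^{(2)}|]$ then yields \eqref{eq:W1_contra_equal_mu} directly.

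For the consequence \eqref{eq:W1_contra_equal_mu_PDE}, I would simply apply the bound just established to $\rho^{(i)}=\rho^{(i)}_t$, the two PDE solutions evaluated at time $t$, whose mean values are $m^{(i)}_t=m^{(i)}_0\,\expo^{-2\mu t}$ according to the formula \eqref{eq:mt_formula} in the case $\mu_+=\mu_-=\mu$. Substituting $|m^{(1)}_t-m^{(2)}_t|=|m^{(1)}_0-m^{(2)}_0|\,\expo^{-2\mu t}$ into \eqref{eq:W1_contra_equal_mu} gives the claimed estimate.

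The argument is essentially mechanical once the representation of Lemma~\ref{lem:Q_+} is in place; the only point requiring genuine care is the bookkeeping of the independence structure in the coupling. One must couple the two Bernoullis maximally and the two $Z$'s optimally, yet keep each $\mathcal{B}^{(i)}$ independent of $Z^{(i)}$ so that the resulting variable genuinely carries the law $Q_+[\rho^{(i)}]$; this is precisely why the two couplings are taken to be mutually independent. The factor $2\mu$ in front of the Bernoulli term then combines with the $\tfrac12$ from the maximal-coupling discrepancy to produce the clean coefficient $\mu$ multiplying $|m^{(1)}-m^{(2)}|$, which is exactly the loss of strict contractivity caused by the dynamics not preserving the mean.
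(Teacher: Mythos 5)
Your proposal is correct and follows essentially the same route as the paper: both use the probabilistic representation of $Q_+$ from Lemma~\ref{lem:Q_+}, couple the two Bernoulli variables so that $\EE|\mathcal{B}^{(1)}-\mathcal{B}^{(2)}|=\tfrac12|m^{(1)}-m^{(2)}|$ while keeping them independent of an optimally coupled pair $(Z^{(1)},Z^{(2)})$, and conclude via the coupling characterization of $W_1$ and the explicit decay $m^{(i)}_t=m^{(i)}_0\,\expo^{-2\mu t}$. If anything, your use of the triangle inequality at the step $\EE|2\mu(\mathcal{B}^{(1)}-\mathcal{B}^{(2)})+(1-\mu)(Z^{(1)}-Z^{(2)})|\leq 2\mu\,\EE|\mathcal{B}^{(1)}-\mathcal{B}^{(2)}|+(1-\mu)\,\EE|Z^{(1)}-Z^{(2)}|$ is cleaner than the paper's claimed equality there, which is not needed for the bound.
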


\begin{proof}
It suffices to establish the estimate \eqref{eq:W1_contra_equal_mu} as the inequality \eqref{eq:W1_contra_equal_mu_PDE} follows readily from \eqref{eq:W1_contra_equal_mu} and the explicit formula \eqref{eq:mt_formula}. Thanks to the probabilistic interpretation \eqref{eq:law_of_Q+_equal_mu} of the collision gain operator $Q_+$, if we let $\mathcal{B}^{(1)} \sim \textrm{Bernoulli}\left(\frac{1-m^{(1)}}{2}\right)$ and $\mathcal{B}^{(2)} \sim \textrm{Bernoulli}\left(\frac{1-m^{(2)}}{2}\right)$ to be two Bernoulli coins such that $\mathcal{B}^{(i)}$ is independent of $Z^{(i)}$ for $i=1,2$, then
\[Q_+[\rho^{(i)}] = \mathcal{L}\left(\mu\,(2\,\mathcal{B}^{(i)} - 1) + (1-\mu)\,Z^{(i)}\right).\] As we can couple the Bernoulli coins $\mathcal{B}^{(1)}$ and $\mathcal{B}^{(2)}$ in such a way to ensure that \[\mathbb{E}\left|\mathcal{B}^{(1)} - \mathcal{B}^{(2)}\right| = \left|\frac{1-m^{(1)}}{2} - \frac{1-m^{(2)}}{2} \right| = \frac{|m^{(1)} - m^{(2)}|}{2}\] and that $\mathcal{B}^{(1)} - \mathcal{B}^{(2)}$ is independent of $Z^{(1)} - Z^{(2)}$, we deduce that
\begin{align*}
W_1\left(Q_+[\rho^{(1)}_t], Q_+[\rho^{(2)}_t]\right) &\leq \mathbb{E}\left|2\,\mu\,(\mathcal{B}^{(1)} - \mathcal{B}^{(2)}) + (1-\mu)\,(Z^{(1)} - Z^{(2)}) \right| \\
&= 2\,\mu\,\mathbb{E}\left|\mathcal{B}^{(1)} - \mathcal{B}^{(2)}\right| + (1-\mu)\,\mathbb{E}\left|Z^{(1)} - Z^{(2)}\right| \\
&= \mu\,|m^{(1)} - m^{(2)}| + (1-\mu)\,\mathbb{E}\left|Z^{(1)} - Z^{(2)}\right|,
\end{align*}
whence the advertised estimate \eqref{eq:W1_contra_equal_mu} follows by choosing the optimal coupling between $Z^{(1)}$ and $Z^{(2)}$ with respect to $W_1$.
\end{proof}

A similar argument also allows us to arrive at a variant of \eqref{eq:W1_contra_equal_mu} in which the $W_1$ distance is replaced by the squared Wasserstein distance of order $2$.

\begin{proposition}[Weak contractivity of $Q_+$ in $W_2$ when $\lambda = \mu$]\label{prop:contra_W2_equal_mu}
Under the settings of Proposition \ref{prop:contra_W1_equal_mu},
\begin{equation}\label{eq:W2_contra_equal_mu}
W^2_2\left(Q_+[\rho^{(1)}], Q_+[\rho^{(2)}]\right) \leq 2\,\mu^2\,|m^{(1)} - m^{(2)}| + (1-\mu)^2\,W^2_2(\rho^{(1)},\rho^{(2)}).
\end{equation}
As a consequence, let $(\rho^{(1)}_t)_{t \geq 0}$ and $(\rho^{(2)}_t)_{t \geq 0}$ be the solutions to \eqref{eq:PDE} corresponding to initial datum $\rho^{(1)}_0$ and $\rho^{(2)}_0$ with mean values $m^{(1)}_0$ and $m^{(2)}_0$, respectively. Then for $t\geq 0$ it holds that
\begin{equation}\label{eq:W2_contra_equal_mu_PDE}
W^2_2\left(Q_+[\rho^{(1)}_t], Q_+[\rho^{(2)}_t]\right) \leq 2\,\mu^2\,|m^{(1)}_0 - m^{(2)}_0|\,\expo^{-2\,\mu\,t} + (1-\mu)^2\,W^2_2(\rho^{(1)}_t,\rho^{(2)}_t).
\end{equation}
\end{proposition}

\begin{proof}
The proof is similar in spirit to the proof of \eqref{eq:W1_contra_equal_mu}, although some extra observations are required. Adopting the notations introduced in the proof of \eqref{eq:W1_contra_equal_mu}, we have
\begin{align*}
W^2_2\left(Q_+[\rho^{(1)}_t], Q_+[\rho^{(2)}_t]\right) &\leq \mathbb{E}\left[\left|2\,\mu\,(\mathcal{B}^{(1)} - \mathcal{B}^{(2)}) + (1-\mu)\,(Z^{(1)} - Z^{(2)}) \right|^2\right] \\
&= (2\,\mu)^2\,\mathbb{E}\left[\left|\mathcal{B}^{(1)} - \mathcal{B}^{(2)}\right|^2\right] + (1-\mu)^2\,\mathbb{E}\left[\left|Z^{(1)} - Z^{(2)}\right|^2\right] \\
&\quad + 4\,\mu\,(1-\mu)\,\mathbb{E}\left[\mathcal{B}^{(1)} - \mathcal{B}^{(2)}\right]\,\mathbb{E}\left[Z^{(1)} - Z^{(2)}\right]  \\
&\leq 2\,\mu^2\,|m^{(1)} - m^{(2)}| + (1-\mu)^2\,\mathbb{E}\left[\left|Z^{(1)} - Z^{(2)}\right|^2\right],
\end{align*}
where the inequality follows from the observation that
\begin{align*}
\mathbb{E}\left[\mathcal{B}^{(1)} - \mathcal{B}^{(2)}\right]\,\mathbb{E}\left[Z^{(1)} - Z^{(2)}\right] &= \left(\frac{1-m^{(1)}}{2} - \frac{1-m^{(2)}}{2}\right)\,\left(m^{(1)} - m^{(2)}\right) \\
&= -\frac 12\,\left|m^{(1)} - m^{(2)}\right|^2 \leq 0.
\end{align*}
Thus the announced estimate \eqref{eq:W2_contra_equal_mu} follows by coupling $Z^{(1)}$ and $Z^{(2)}$ in the optimal way with respect to $W_2$.
\end{proof}

We are now ready to establish the convergence of the solution of the mean-field PDE \eqref{eq:PDE} under the $W_2$ framework.

\begin{theorem}\label{thm:2}
Assume that $\lambda = \mu \in (0,1)$. Let $(\rho^{(1)}_t)_{t \geq 0}$ and $(\rho^{(2)}_t)_{t \geq 0}$ be the solutions to \eqref{eq:PDE} corresponding to initial datum $\rho^{(1)}_0$ and $\rho^{(2)}_0$ with mean values $m^{(1)}_0$ and $m^{(2)}_0$, respectively. Then for all $t\geq 0$ we have
\begin{equation}\label{eq:pairwise_conv_W2_equal_mu}
W^2_2\left(\rho^{(1)}_t, \rho^{(2)}_t\right) \leq \left(W^2_2\left(\rho^{(1)}_0, \rho^{(2)}_0\right) + 2\,\left|m^{(1)}_0 - m^{(2)}_0\right|\right)\,\expo^{-(2\mu - \mu^2)\,t}.
\end{equation}
In particular, for all $t\geq 0$ we have
\begin{equation}\label{eq:conv_W2_equal_mu}
W^2_2\left(\rho_t, \rho_\infty\right) \leq \left(W^2_2\left(\rho_0, \rho_\infty\right) + 2\,|m_0|\right)\,\expo^{-(2\mu - \mu^2)\,t}.
\end{equation}
\end{theorem}

\begin{proof}
The proof follows from a straightforward adaptation of a well-established procedure encountered in the study of the asymptotic behavior of dissipative kinetic equations \cite{bobylev_generalization_1992}, for which we refer the interested readers to the seminal work \cite{carrillo_contractive_2007} for more details. We recall that our mean-field PDE under investigation reads as
\begin{equation}\label{eq:main_PDE}
\frac{\partial \rho_t}{\partial t} = Q_+[\rho_t] - \rho_t.
\end{equation}
Consider the explicit Euler approximation to equation \eqref{eq:main_PDE}, given by
\[\rho_{t+\Delta t}(x) = \Delta t\,Q_+[\rho_t](x) + (1- \Delta t)\,\rho_t(x)\]
where $\Delta t \ll 1$. Due to the (joint) convexity of the squared Wasserstein distance of order 2 \cite{carrillo_contractive_2007}, the previous identity implies that
\begin{equation}\label{eq:E1}
W^2_2\left(\rho^{(1)}_{t+\Delta t}, \rho^{(2)}_{t+\Delta t}\right) \leq \Delta t\,W^2_2\left(Q_+[\rho^{(1)}_t], Q_+[\rho^{(2)}_t]\right) + (1-\Delta t)\,W^2_2\left(\rho^{(1)}_t, \rho^{(2)}_t\right).
\end{equation}
Inserting the estimate \eqref{eq:W2_contra_equal_mu_PDE} into \eqref{eq:E1} yields
\begin{align*}
&W^2_2\left(\rho^{(1)}_{t+\Delta t}, \rho^{(2)}_{t+\Delta t}\right) \\
&\leq \left[1+((1-\mu)^2-1)\,\Delta t\right]\,W^2_2\left(\rho^{(1)}_t, \rho^{(2)}_t\right) + \Delta t\,2\,\mu^2\,|m^{(1)}_0 - m^{(2)}_0|\,\expo^{-2\,\mu\,t} \\
&= W^2_2\left(\rho^{(1)}_t, \rho^{(2)}_t\right) + \Delta t\,\left(-(2\,\mu - \mu^2)\,W^2_2\left(\rho^{(1)}_t, \rho^{(2)}_t\right) + 2\,\mu^2\,|m^{(1)}_0 - m^{(2)}_0|\,\expo^{-2\,\mu\,t}\right),
\end{align*}
which leads us to the following differential inequality:
\begin{equation}\label{eq:ODE_W2}
\frac{\dd}{\dd t} W^2_2\left(\rho^{(1)}_t, \rho^{(2)}_t\right) \leq -(2\,\mu - \mu^2)\,W^2_2\left(\rho^{(1)}_t, \rho^{(2)}_t\right) + 2\,\mu^2\,|m^{(1)}_0 - m^{(2)}_0|\,\expo^{-2\,\mu\,t}.
\end{equation}
The desired bound \eqref{eq:pairwise_conv_W2_equal_mu} follows readily from \eqref{eq:ODE_W2}.
\end{proof}

We emphasize here that the quantitative exponential convergence \eqref{eq:conv_W2_equal_mu} of the solution of the mean-field PDE \eqref{eq:main_PDE} in the $W_2$ metric follows essentially from the weak contractivity property (again in $W_2$) of the collision gain operator $Q_+$ \eqref{eq:W2_contra_equal_mu} together with the joint convexity of the squared $W_2$ distance \eqref{eq:E1}. On the other hand, due to the lack of a joint convexity property of the $W_1$ distance, it appears quite challenging to deduce a quantitative estimate on the rate of convergence of $W_1\left(\rho_t, \rho_\infty\right)$ towards zero from Proposition \ref{prop:contra_W1_equal_mu}.

So far our analysis of the large time behavior of the mean-field PDE \eqref{eq:PDE}, especially the proof of Theorem \ref{thm:2}, is based on a operator-theoretic approach where suitable (weak) contractivity properties of the collision gain operator $Q_+$ under certain Wasserstein metrics can be established. However, the aforementioned framework appears restrictive to the (squared) Wasserstein distance of order $2$ as $W^2_2(\cdot,\cdot)$ enjoys the joint convexity property in its arguments while other $W_p$ (for $p \in [1,\infty)$ but $p \neq 2$) does not. The following result extends the $W_2$ convergence reported in Theorem \ref{thm:2} to a convergence guarantee under $W_p$ for any $p \in [1,\infty)$, by virtue of a coupling technique applied directly to the mean-field SDE dynamics \eqref{eq:SDE_linear}.

\begin{theorem}\label{thm:X}
Assume that $\lambda = \mu \in (0,1)$. Let $(\rho^{(1)}_t)_{t \geq 0}$ and $(\rho^{(2)}_t)_{t \geq 0}$ be the solutions to \eqref{eq:PDE} corresponding to initial datum $\rho^{(1)}_0$ and $\rho^{(2)}_0$ with mean values $m^{(1)}_0$ and $m^{(2)}_0$, respectively. Then for all $t\geq 0$ and any $p \geq 1$ we have
\begin{equation}\label{eq:pairwise_conv_Wp_equal_mu}
W^p_p\left(\rho^{(1)}_t, \rho^{(2)}_t\right) \leq W^p_p\left(\rho^{(1)}_0, \rho^{(2)}_0\right)\,\expo^{-(1-(1-\mu)^p)t} + \frac{2^{p-1}\,\left|m^{(1)}_0 - m^{(2)}_0\right|}{(1-(1-\mu)^p)-2\,\mu}\,\left(\expo^{-2\mu t} - \expo^{-(1-(1-\mu)^p)t}\right).
\end{equation}
In particular, for all $t\geq 0$ we have
\begin{equation}\label{eq:conv_Wp_equal_mu}
W^p_p\left(\rho_t, \rho_\infty\right) \leq W^p_p\left(\rho_0, \rho_\infty\right)\,\expo^{-(1-(1-\mu)^p)t} + \frac{2^{p-1}\,|m_0|}{(1-(1-\mu)^p)-2\,\mu}\,\left(\expo^{-2\mu t} - \expo^{-(1-(1-\mu)^p)t}\right).
\end{equation}
\end{theorem}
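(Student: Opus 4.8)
The plan is to couple two copies of the mean-field process directly at the level of the jump SDE \eqref{eq:SDE_linear}, in the same spirit as the proof of Theorem \ref{thm:UPoC}. This sidesteps the joint-convexity property that confined the operator-theoretic argument behind Theorem \ref{thm:2} to the exponent $p=2$. Concretely, I would let $(Z_t^{(1)})_{t\geq 0}$ and $(Z_t^{(2)})_{t\geq 0}$ both solve \eqref{eq:SDE_linear} with $\mu_+=\mu_-=\mu$, driven by \emph{the same} Poisson point measure $\cP$, with deterministic means $m_t^{(i)}=\EE[Z_t^{(i)}]=m_0^{(i)}\,\expo^{-2\mu t}$, and initialize the pair $(Z_0^{(1)},Z_0^{(2)})$ from an optimal $W_p$-coupling of $\rho_0^{(1)}$ and $\rho_0^{(2)}$. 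Since $(Z_t^{(1)},Z_t^{(2)})$ is then an admissible coupling of $\rho_t^{(1)}$ and $\rho_t^{(2)}$ at every later time, it suffices to control $h(t)\coloneqq \EE[|Z_t^{(1)}-Z_t^{(2)}|^p]$, for which $h(0)=W_p^p(\rho_0^{(1)},\rho_0^{(2)})$ and $W_p^p(\rho_t^{(1)},\rho_t^{(2)})\leq h(t)$.

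Next I would run the same three-way case analysis as in \eqref{eq:chain_of_estimates}. Writing $r_t=\min(m_t^{(1)},m_t^{(2)})$ and $R_t=\max(m_t^{(1)},m_t^{(2)})$, both processes jump toward $+1$ when $u<(1-R_t)/2$ and both toward $-1$ when $u\geq(1-r_t)/2$; in either event a single jump contracts $|Z^{(1)}-Z^{(2)}|$ by the factor $(1-\mu)$, while on the intermediate slab of $u$-measure $(R_t-r_t)/2$ the jumps point in opposite directions and I bound the post-jump distance by $2$. This yields
\[
\frac{\dd}{\dd t}h(t)+h(t)\leq (1-\mu)^p\,h(t)\Big(1-\tfrac{R_t-r_t}{2}\Big)+2^{p-1}\,(R_t-r_t).
\]

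The decisive simplification, absent from the propagation-of-chaos setting, is that $m_t^{(1)},m_t^{(2)}$ are now \emph{deterministic}, so $R_t-r_t=|m_t^{(1)}-m_t^{(2)}|=|m_0^{(1)}-m_0^{(2)}|\,\expo^{-2\mu t}$ leaves the expectation with no law-of-large-numbers remainder. Discarding the favorable factor $(1-(R_t-r_t)/2)\leq 1$ reduces the problem to the scalar linear differential inequality
\[
\frac{\dd}{\dd t}h(t)\leq -\big(1-(1-\mu)^p\big)\,h(t)+2^{p-1}\,|m_0^{(1)}-m_0^{(2)}|\,\expo^{-2\mu t}.
\]

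To close, I would integrate this with the factor $\expo^{(1-(1-\mu)^p)t}$, using $\int_0^t \expo^{-\alpha(t-s)}\expo^{-\beta s}\,\dd s=(\expo^{-\beta t}-\expo^{-\alpha t})/(\alpha-\beta)$ with $\alpha=1-(1-\mu)^p$ and $\beta=2\mu$; this reproduces \eqref{eq:pairwise_conv_Wp_equal_mu}, and \eqref{eq:conv_Wp_equal_mu} then follows by taking $\rho_0^{(2)}=\rho_\infty$ (so that $m_0^{(2)}=0$ and $\rho_t^{(2)}\equiv\rho_\infty$). The computation is routine once the coupling is fixed; the only points deserving care are the resonant case $\alpha=\beta$, where the quotient must be interpreted as its limit $t\,\expo^{-\alpha t}$, and the sign of $\alpha-\beta$, which can be negative (for instance at $p=1$) but is always cancelled by the matching sign of $\expo^{-\beta t}-\expo^{-\alpha t}$, so the stated bound remains nonnegative.
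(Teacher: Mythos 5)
Your proposal is correct and follows essentially the same route as the paper's own proof: both couple two copies of the mean-field SDE \eqref{eq:SDE_linear} through the same Poisson point measure with an optimal initial $W_p$-coupling, run the three-case jump analysis of \eqref{eq:chain_of_estimates} with the deterministic means $m_t^{(i)}=m_0^{(i)}\expo^{-2\mu t}$, and close via Gr\"onwall on the resulting linear differential inequality. Your added remarks on the resonant case $1-(1-\mu)^p=2\mu$ and on the sign of the denominator are a welcome bit of extra care that the paper leaves implicit.
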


\begin{proof}
The proof is similar to the proof of Theorem \ref{thm:UPoC} carried out at the level of the agent-based model. Let $Z^{(1)}_t$ and $Z^{(2)}_t$ be the strong solutions to the mean-field SDE \eqref{eq:SDE_linear} employing exactly the same Poisson point measure $\cP(\dd t, \dd u)$, and starting from initial datum $Z^{(1)}_0 \sim \rho^{(1)}_0$ and $Z^{(2)}_0 \sim \rho^{(2)}_0$, respectively. Moreover, we can couple $Z^{(1)}_0$ and $Z^{(2)}_0$ in a optimal way (with respect to $W_p$) to ensure that $\mathbb{E}[|Z^{(1)}_0 - Z^{(2)}_0|^p] = W^p_p(\rho^{(1)}_0,\rho^{(2)}_0)$. Let $h(t) \coloneqq \mathbb{E}[|Z^{(1)}_t - Z^{(2)}_t|^p]$ which serves as a trivial upper bound on $W^p_p(\rho^{(1)}_t,\rho^{(2)}_t)$, a similar estimate as in \eqref{eq:chain_of_estimates} provided in the proof of Theorem \ref{thm:UPoC} leads us to
\begin{equation*}
\begin{aligned}
\frac{\dd}{\dd t} h(t) + h(t) &\leq (1-\mu)^p\,h(t) + 2^{p-1}\,\left|m^{(1)}_t - m^{(2)}_t\right| \\
&= (1-\mu)^p\,h(t) + 2^{p-1}\,\left|m^{(1)}_0 - m^{(2)}_0\right|\,\expo^{-2\mu t}.
\end{aligned}
\end{equation*}
Consequently we deduce that \[\frac{\dd}{\dd t} h(t) \leq -\left(1 - (1-\mu)^p\right)\,h(t) + 2^{p-1}\,\left|m^{(1)}_0 - m^{(2)}_0\right|\,\expo^{-2\mu t}\] and the desired bound \eqref{eq:pairwise_conv_Wp_equal_mu} follows readily from Grönwall's inequality.
\end{proof}

\begin{remark}
In the special case where $p=2$, the estimate \eqref{eq:pairwise_conv_Wp_equal_mu} in Theorem \ref{thm:X} implies that
\begin{equation*}
W^2_2\left(\rho^{(1)}_t, \rho^{(2)}_t\right) \leq \left(W^2_2\left(\rho^{(1)}_0, \rho^{(2)}_0\right) + \frac{2\,\left|m^{(1)}_0 - m^{(2)}_0\right|}{\mu^2}\right)\,\expo^{-(2\mu - \mu^2)\,t}.
\end{equation*}
which is comparable to the previous estimate \eqref{eq:pairwise_conv_W2_equal_mu} reported in Theorem \ref{thm:2}.
\end{remark}

We now show that the weak contractivity of the collision gain operator $Q_+$ can also be established (when $\lambda = \mu$) under a Fourier-based metric which also enjoys the desired the joint convexity property \cite{carrillo_contractive_2007}. For the reader's convenience, we provide a quick review of the so-called Fourier-based distance of order $s \geq 1$ (sometimes also referred to as the Toscani distance of order $s$) \cite{carrillo_contractive_2007}, defined by
\begin{equation}
\label{eq:Toscani_distance}
d_s(f,g) \coloneqq \sup\limits_{\xi \in \mathbb{R}\setminus \{0\}} \frac{|\hat{f}(\xi)-\hat{g}(\xi)|}{|\xi|^s},
\end{equation}
where $f \in \mathcal{P}(\mathbb R)$ and $g \in \mathcal{P}(\mathbb R)$ are probability laws on $\mathbb R$, and
\[\hat{f}(\xi)\coloneqq \int_{\mathbb R} \expo^{-i\,x\,\xi}\,f(\dd x)\] denotes the Fourier transform of $f$. These Fourier-based distances \eqref{eq:Toscani_distance} are introduced in a series of works \cite{carrillo_contractive_2007,gabetta_metrics_1995,goudon_fourier_2002} for the study of the problem of convergence to equilibrium for the spatially homogenous Boltzmann equation stemmed from statistical physics. We emphasize that the Fourier-based distances have also been applied to problems arising from other sub-branches of traditional statistical physics, such as econophysics and sociophysics \cite{cao_bennati_2025,during_boltzmann_2008,matthes_steady_2008,naldi_mathematical_2010}.

We now prove a weak contractivity result of $Q_+$ in $d_1$ when $\lambda = \mu$, which serves as a analogue of Proposition \ref{prop:contra_W2_equal_mu}.

\begin{proposition}[Weak contractivity of $Q_+$ in $d_1$ when $\lambda = \mu$]\label{prop:contra_d1_equal_mu}
Under the settings of Proposition \ref{prop:contra_W1_equal_mu},
\begin{equation}\label{eq:d1_contra_equal_mu}
d_1\left(Q_+[\rho^{(1)}], Q_+[\rho^{(2)}]\right) \leq \mu\,|m^{(1)} - m^{(2)}| + (1-\mu)\,d_1(\rho^{(1)},\rho^{(2)}).
\end{equation}
Consequently, let $(\rho^{(1)}_t)_{t \geq 0}$ and $(\rho^{(2)}_t)_{t \geq 0}$ be the solutions to \eqref{eq:main_PDE} corresponding to initial datum $\rho^{(1)}_0$ and $\rho^{(2)}_0$ with mean values $m^{(1)}_0$ and $m^{(2)}_0$, respectively. Then for $t\geq 0$ it holds that
\begin{equation}\label{eq:d1_contra_equal_mu_PDE}
d_1\left(Q_+[\rho^{(1)}_t], Q_+[\rho^{(2)}_t]\right) \leq \mu\,|m^{(1)}_0 - m^{(2)}_0|\,\expo^{-2\,\mu\,t} + (1-\mu)\,d_1(\rho^{(1)}_t,\rho^{(2)}_t).
\end{equation}
\end{proposition}

\begin{proof}
Using the notations introduced in the proof of \eqref{eq:W1_contra_equal_mu}, we have
\begin{equation}\label{eq:chain_of_inequality}
\begin{aligned}
&d_1\left(Q_+[\rho^{(1)}], Q_+[\rho^{(2)}]\right) \\
&= d_1\left(\mathcal{L}\left(\mu\,(2\,\mathcal{B}^{(1)} - 1) + (1-\mu)\,Z^{(1)}\right),\mathcal{L}\left(\mu\,(2\,\mathcal{B}^{(2)} - 1) + (1-\mu)\,Z^{(2)}\right)\right) \\
&\leq d_1\left(\mathcal{L}\left(\mu\,(2\,\mathcal{B}^{(1)} - 1)\right),\mathcal{L}\left(\mu\,(2\,\mathcal{B}^{(2)} - 1)\right)\right) + d_1\left(\mathcal{L}\left((1-\mu)\,Z^{(1)}\right),\mathcal{L}\left((1-\mu)\,Z^{(2)}\right)\right) \\
&=2\,\mu\,\left|\frac{1-m^{(1)}}{2} - \frac{1-m^{(2)}}{2}\right| + (1-\mu)\,d_1\left(\rho^{(1)},\rho^{(2)}\right),
\end{aligned}
\end{equation}
where the inequality in \eqref{eq:chain_of_inequality} is a consequence of the super-additivity property of $d_1$ with respect to convolution \cite{carrillo_contractive_2007}, and the last identity follows from the explicit computation of $d_1\left(\mathcal{L}\left(\mu\,(2\,\mathcal{B}^{(1)} - 1)\right),\mathcal{L}\left(\mu\,(2\,\mathcal{B}^{(2)} - 1)\right)\right)$ and the scaling property of $d_1$ \cite{carrillo_contractive_2007}.
\end{proof}

\begin{remark}
We remark here that due to the classical fact \cite{carrillo_contractive_2007} that $d_s(f,g) < \infty$ if and only if the laws $f$ and $g$ share the same moments up to order $\floor{s}$ if $s \notin \mathbb N$ or the same moments up to order $s-1$ if $s\in \mathbb N$ (where $\floor{s}$ denotes the integer part of $s$), weak contractivity property of the form \eqref{eq:d1_contra_equal_mu} cannot be true if we work with $d_s$ for $s > 1$ instead of $d_1$. Indeed, one can readily verify via a straightforward computation that
\begin{equation*}
\begin{aligned}
d_s\left(\mathcal{L}\left(\mu\,(2\,\mathcal{B}^{(1)} - 1)\right),\mathcal{L}\left(\mu\,(2\,\mathcal{B}^{(2)} - 1)\right)\right) &= 2\,\left|p^{(1)} -  p^{(2)}\right|\,\sup_{\xi \neq 0} \frac{|\sin(\mu\,\xi)|}{|\xi|^s} \\
&= \begin{cases}
\infty, &~~\textrm{if $s > 1$},\\
2\,\mu\,\left|p^{(1)} -  p^{(2)}\right|, &~~\textrm{if $s = 1$},
\end{cases}
\end{aligned}
\end{equation*}
whenever $\mathcal{B}^{(i)} \sim \textrm{Bernoulli}\left(p^{(i)}\right)$.
\end{remark}

Equipped with the content of Proposition \ref{prop:contra_d1_equal_mu}, we can prove convergence of the solution of the mean-field PDE \eqref{eq:PDE} in the $d_1$ framework.

\begin{theorem}\label{thm:3}
Assume that $\lambda = \mu \in (0,1)$. Let $(\rho^{(1)}_t)_{t \geq 0}$ and $(\rho^{(2)}_t)_{t \geq 0}$ be the solutions to \eqref{eq:PDE} corresponding to initial datum $\rho^{(1)}_0$ and $\rho^{(2)}_0$ with mean values $m^{(1)}_0$ and $m^{(2)}_0$, respectively. Then for all $t\geq 0$ we have
\begin{equation}\label{eq:pairwise_conv_d1_equal_mu}
d_1\left(\rho^{(1)}_t, \rho^{(2)}_t\right) \leq \left(d_1\left(\rho^{(1)}_0, \rho^{(2)}_0\right) + \left|m^{(1)}_0 - m^{(2)}_0\right|\right)\,\expo^{-\mu\,t}.
\end{equation}
In particular, for all $t\geq 0$ we have
\begin{equation*}
	%\label{eq:conv_d1_equal_mu}
	d_1\left(\rho_t, \rho_\infty\right) \leq \left(d_1\left(\rho_0, \rho_\infty\right) + |m_0|\right)\,\expo^{-\mu\,t}.
\end{equation*}
\end{theorem}

\begin{proof}
The proof of \eqref{eq:pairwise_conv_d1_equal_mu} resembles the proof of \eqref{eq:pairwise_conv_W2_equal_mu} from Theorem \ref{thm:2} hence we omit the details. We only need to assemble the joint convexity of the Toscani distance $d_1$ \cite{carrillo_contractive_2007} together with the weak contractivity estimate \eqref{eq:d1_contra_equal_mu_PDE}, to obtain the following differential inequality:
\begin{equation*}
	%\label{eq:ODE_d1}
	\frac{\dd}{\dd t} d_1\left(\rho^{(1)}_t, \rho^{(2)}_t\right) \leq -\mu\,d_1\left(\rho^{(1)}_t, \rho^{(2)}_t\right) + \mu\,|m^{(1)}_0 - m^{(2)}_0|\,\expo^{-2\,\mu\,t},
\end{equation*}
from which the announced bound \eqref{eq:pairwise_conv_d1_equal_mu} follows immediately.
\end{proof}

So far our analysis of the large time behavior of the mean-field PDE \eqref{eq:PDE} has been restricted to the special case where $\lambda = \mu$. In fact, when $\lambda \neq \mu$ it seems impossible to establish the corresponding (weak) contractivity property of the collision gain operator $Q_+$ in $W_2$ or $d_1$. Fortunately, the pathwise coupling method utilized in the proof of Theorem \ref{thm:X} admits a straightforward generalization which enables us to establish a analogue of the large time convergence guarantee \eqref{eq:conv_Wp_equal_mu} even in the case $\lambda \neq \mu$.

\begin{theorem}\label{thm:Y}
Assume that $\lambda < \mu$. Let $(\rho^{(1)}_t)_{t \geq 0}$ and $(\rho^{(2)}_t)_{t \geq 0}$ be the solutions to \eqref{eq:PDE} corresponding to initial datum $\rho^{(1)}_0$ and $\rho^{(2)}_0$ with mean values $m^{(1)}_0$ and $m^{(2)}_0$, respectively. Then for all $t\geq 0$ and any $p \geq 1$, there exists some constant $K$ depending only on $m^{(1)}_0$, $m^{(2)}_0$ and $\lambda, \mu$ such that
\begin{equation}\label{eq:pairwise_conv_Wp_nonequal_mu}
\begin{aligned}
W^p_p\left(\rho^{(1)}_t, \rho^{(2)}_t\right) &\leq W^p_p\left(\rho^{(1)}_0, \rho^{(2)}_0\right)\,\expo^{-(1-(1-\lambda)^p)t} \\
&\quad + \frac{2^{p-1}\,K}{(1-(1-\lambda)^p)-2\,\sqrt{\mu\,\lambda}}\,\left(\expo^{-2\sqrt{\mu\,\lambda}\, t} - \expo^{-(1-(1-\lambda)^p)t}\right).
\end{aligned}
\end{equation}
%where we have set $\mu_{\textrm{min}} \coloneqq \min\{\lambda,\mu\}$.
In particular, for all $t\geq 0$ we have
\begin{equation*}
	%\label{eq:conv_Wp_nonequal_mu}
	\begin{aligned}
	W^p_p\left(\rho_t, \rho_\infty\right) &\leq W^p_p\left(\rho_0, \rho_\infty\right)\,\expo^{-(1-(1-\lambda)^p)t} \\
	&\quad + \frac{2^{p-1}\,K}{(1-(1-\lambda)^p)-2\,\sqrt{\mu\,\lambda}}\,\left(\expo^{-2\sqrt{\mu\,\lambda}\,t} - \expo^{-(1-(1-\lambda)^p)t}\right).
\end{aligned}
\end{equation*}
\end{theorem}

\begin{proof}
The argument follows along the same lines exhibited in the proof of Theorem \ref{thm:X} so we omit the details. In essence, if we let $h(t) \coloneqq \mathbb{E}[|Z^{(1)}_t - Z^{(2)}_t|^p]$, since $\lambda < \mu$, a similar computation as in the proof of Theorem \ref{thm:X} leads us to
\begin{equation*}
\begin{aligned}
\frac{\dd}{\dd t} h(t) + h(t) &\leq \left(1-\lambda\right)^p\,h(t) + 2^{p-1}\,\left|m^{(1)}_t - m^{(2)}_t\right| \\
&\leq (1-\lambda)^p\,h(t) + 2^{p-1}\,K\,\expo^{-2\sqrt{\mu\,\lambda}\,t},
\end{aligned}
\end{equation*}
where the second inequality follows from the explicit formula \eqref{eq:mt_formula}. Thus we arrive at the following differential inequality \[\frac{\dd}{\dd t} h(t) \leq -\left(1-(1-\lambda)^p\right)\,h(t) + 2^{p-1}\,K\,\expo^{-2\sqrt{\mu\,\lambda}\,t},\] from which the estimate bound \eqref{eq:pairwise_conv_Wp_nonequal_mu} follows thanks to Grönwall's inequality.
\end{proof}

\section{Stationary distribution of opinions}
\label{sec:sec4}
\setcounter{equation}{0}
%%%%%%%%%%%%%%%%%%%%%%%%%%%%%%%%%%%%%%%%%%%%%%%%%%%%%%%%%%%%%%%%%%%%%%

\subsection{A generalized Bernoulli convolution}

In this section we focus on the investigation of the stationary distribution $\rho_\infty$ of the mean-field Boltzmann-type PDE \eqref{eq:PDE}, and a slight generalization of it. We begin from the elementary observation, thanks to Lemma \ref{lem:Q_+}, that if $Z_\infty$ is a $\rho_\infty$-distributed random variable, then it must satisfy the following relation:
\[Z_\infty \stackrel{\dd}{=} \mu\,\mathcal{B}_\infty - \lambda\,(1-\mathcal{B}_\infty) + Z_\infty\,\left[(1-\mu)\,\mathcal{B}_\infty + (1-\lambda)\,(1-\mathcal{B}_\infty)\right]\]
or equivalently
\begin{equation}\label{eq:characterization_of_equilibrium}
Z_\infty \stackrel{\dd}{=} \mathcal{B}_\infty\,\left[(1-\mu)\,Z_\infty + \mu\right] + (1-\mathcal{B}_\infty)\,\left[(1-\lambda)\,Z_\infty - \lambda\right],
\end{equation}
where $\stackrel{\dd}{=}$ stands for equality in the sense of distribution. Here $\mathcal{B}_\infty$ is a Bernoulli random variable with parameter $p_\infty = (1-m_\infty)/2$, independent of $Z_\infty$, where we recall that
\[
m_\infty \coloneqq \frac{\sqrt{\mu} - \sqrt{\lambda}}{\sqrt{\mu} + \sqrt{\lambda}},
\qquad \text{thus} \qquad
p_\infty = \frac{\sqrt{\lambda}}{\sqrt{\mu} + \sqrt{\lambda}}.
\]
By a simple contraction argument (which we omit), it can be seen that there exists a unique solution $\rho_\infty = \mathcal{L}(Z_\infty)$ of the equation in distribution \eqref{eq:characterization_of_equilibrium}.

In particular, when $\lambda = \mu \in (0,1)$, the characterization \eqref{eq:characterization_of_equilibrium} simplifies to
\begin{equation}\label{eq:characterization_of_equilibrium_equal_mu}
Z_\infty \stackrel{\dd}{=} (1-\mu)\,Z_\infty + \mu\,\mathcal{R},
\end{equation}
in which $\mathcal{R}$ is a symmetric Rademacher random variable independent of $Z_\infty$, that is, $\mathcal{R}=\pm 1$ with equal probabilities. It is well-known in the fractal geometry literature \cite{erdos_family_1939,kershner_symmetric_1935,solomyak_random_1995,varju_recent_2016,varju_absolute_2019} that the random variable satisfying \eqref{eq:characterization_of_equilibrium_equal_mu} can be expressed in terms of the so-called Bernoulli convolution (sometimes also known as the Rademacher series), which, when $\mu > 1/2$, leads to a Cantor-like fractal structure, in the sense that $\rho_\infty$ is a probability measure on $[-1,1]$ whose support coincides with a Cantor-like set of Lebesgue measure zero. In the context of multi-agent opinion dynamics, the aforementioned fractal structure was rediscovered in the recent work \cite{cao_fractal_2024}, to which we refer the interested reader for a detailed discussion.

In more generality, for $\lambda \neq \mu$, one can consider an equation in distribution like \eqref{eq:characterization_of_equilibrium} with a Bernoulli random variable of arbitrary parameter $p \in (0,1)$. More specifically: denote $\GBC(\lambda,\mu,p)$ (for ``Generalized Bernoulli Convolution'') the law of the random variable $Z$ which is the unique (in distribution) random variable on $[-1,1]$ satisfying
\begin{equation}
\label{eq:def_of_GBC}
	Z \stackrel{\dd}{=}
	\mathcal{B} \,\left[(1-\mu) Z + \mu\right] + (1-\mathcal{B})\,\left[(1-\lambda)\,Z - \lambda\right],
\end{equation}
with $\mathcal{B} \sim \textrm{Bernoulli}(p)$ independent of $Z$. We can write the following explicit formula for $Z \sim \GBC(\lambda,\mu,p)$ (see also \cite{neunhauserer_2001}): given a collection $(\cB_k)_{k=1}^\infty$ of independent $\textrm{Bernoulli}(p)$ random variables, denote $S_k = \sum_{j=1}^k \cB_j$, and set
\begin{equation*}
	%\label{eq:GBC_series}
	%Z = \sum_{k=1}^\infty (1-\mu)^{S_k} (1-\lambda)^{k-S_k} \left[\frac{\mu}{1-\mu}\,\cB_k - \frac{\lambda}{1-\lambda}\,(1-\cB_k)\right].
	Z = \sum_{k=1}^\infty (1-\mu)^{S_{k-1}} (1-\lambda)^{k-1-S_{k-1}} \left[\mu \,\cB_k - \lambda\,(1-\cB_k)\right].
\end{equation*}
It is straightforward to verify that $Z$ solves \eqref{eq:def_of_GBC}.

An equivalent characterization of $\GBC(\lambda,\mu,p)$ using terminologies from \emph{iterative function systems} \cite{falconer_fractal_geometry_1990,hutchinson_fractals_1981,hochman_AnnMath_2014} (IFS) is as follows: consider the maps $T_{0,\lambda}, T_{1,\mu}: [-1,1] \to [-1,1]$ given by
\begin{equation*}
	%\label{eq:IFS}
	T_{0,\lambda}(x) \coloneqq (1-\lambda)\,x-\lambda
	\quad \textrm{and} \quad
	T_{1,\mu}(x) \coloneqq (1-\mu)\,x + \mu.
\end{equation*}
Then the law $\rho \coloneqq \GBC(\lambda,\mu,p)$ is the (unique) probability measure on $[-1,1]$ satisfying
\begin{equation}\label{eq:GBC_equivalent_characterization}
	\rho = (1-p)\,T_{0,\lambda}\# \rho + p\,T_{1,\mu}\# \rho,
\end{equation}
where $T \# \rho \in \mathcal{P}([-1,1])$ denotes the push-forward (probability) measure of $\rho$ by the mapping $T$. Whenever no confusion is possible, we will also abbreviate these maps as $T_0$ and $T_1$, for notational simplicity.

In the fractal geometry literature, the measure $\GBC(\lambda,\mu,p)$ has been considered in \cite{hochman_AnnMath_memoir_2015,neunhauserer_2001,ngai_wang_2005,saglietti_etal_2018} (albeit with a different parameter convention), where it is referred to as a ``non-uniform'' or ``non-homogeneous'' self-similar measure. In the following theorem, we state several analytic, probabilistic, and geometric properties of $\GBC(\lambda,\mu,p)$ that are relevant in our setting. Most of these properties are already well known; nevertheless, for the reader's convenience and for the sake of completeness, we provide self-contained proofs in the Appendix.

\begin{theorem}
	\label{thm:GBC}
	Given $(\lambda,\mu,p) \in (0,1)^3$ and $Z \sim \GBC(\lambda,\mu,p)$, we have:
	\begin{enumerate}[label=(\roman*)]
		
		\item \label{thm:GBC:moments}
		The first and second moment of $Z$ are given by
		\[
		\mathbb{E}[Z] = \frac{p\,\mu - (1-p)\,\lambda}{p\,\mu + (1-p)\,\lambda}
		\]
		and
		\[\mathbb{E}[Z^2] = \frac{2\left(p\,\mu\,(1-\mu)-(1-p)\,\lambda\,(1-\lambda)\right)\mathbb{E}[Z] + p\,\mu^2 + (1-p)\,\lambda^2}{1-p\,(1-\mu)^2-(1-p)\,(1-\lambda)^2}.
		\]
		
		\item \label{thm:GBC:uniform}
		If $p = \lambda = 1-\mu$, then $Z \sim \textrm{Uniform}([-1,1])$. In other words,
		\[
			\GBC(\lambda,1-\lambda,\lambda) = \textrm{Uniform}([-1,1]).
		\]
		
		\item \label{thm:GBC:abs_cont_singular}
		$\GBC(\lambda,\mu,p)$ is either absolutely continuous or singular continuous with respect to the (normalized) Lebesgue measure on $[-1,1]$.
		
		\item \label{thm:GBC:atomless}
		$\GBC(\lambda,\mu,p)$ has no atom.
		
		\item \label{thm:GBC:full_support}
		If $\mu + \lambda \leq 1$, then $\GBC(\lambda,\mu,p)$ has a full support on $[-1,1]$.
		
		\item \label{thm:GBC:Hausdorff}
		If $\lambda + \mu > 1$, then $\GBC(\lambda,\mu,p)$ has a fractal support whose Hausdorff dimension $D \in (0,1)$ is the unique solution of the equation
		\[
			(1-\mu)^D + (1-\lambda)^D = 1.
		\]
		
		\item \label{thm:GBC:mutually_singular}
		$\GBC(\lambda,1-\lambda,p)$ and $\GBC(\lambda,1-\lambda,q)$ are mutually singular whenever $p\neq q$. In particular, $\GBC(\lambda,1-\lambda,p)$ is singular continuous with respect to the (normalized) Lebesgue measure whenever $p\neq \lambda$.
		
		\item \label{thm:GBC:Gaussian}
		Let $X \coloneqq \frac{Z-\mathbb{E}[Z]}{\sigma}$, where $\sigma^2 \coloneqq \Var[Z]$. In the limit $\mu \to 0$ and $\lambda \to 0$ with the ratio $\frac{\mu}{\lambda} = \frac{1-p}{p}$ held fixed, the law of $X$ converges (in distribution) to the standard Gaussian $\mathcal{N}(0,1)$.
		
		\item \label{thm:GBC:W1}
		Given $(\lambda',\mu',p') \in (0,1)^3$, it holds that
		\begin{equation*}
			\begin{aligned}
			%\label{eq:W1_stability}
			& W_1\left(\GBC(\lambda,\mu,p), \GBC(\lambda',\mu',p')\right) \\
			& \leq \frac{2}{p\,\mu + (1-p)\,\lambda}\,\left(|\lambda - \lambda'| + |\mu-\mu'| + |p-p'|\right).
			\end{aligned}
		\end{equation*}
	\end{enumerate}
\end{theorem}

In particular, note the dichotomy stated in \ref{thm:GBC:full_support}-\ref{thm:GBC:Hausdorff}: if $\lambda + \mu > 1$, then $\GBC(\lambda,\mu,p)$ has fractal support; otherwise, its support is the whole interval $[-1,1]$. Interestingly enough, even if $\lambda + \mu \leq 1$, the distribution $\GBC(\lambda,\mu,p)$ can still be singular with respect to the Lebesgue measure. For fixed $p\in(0,1)$, the following result describes the region of singular parameters $(\lambda,\mu) \in (0,1)^2$, except for a negligible set. It is a particular case of \cite[Theorem 1.1]{saglietti_etal_2018}, see also \cite[Theorem 1.15]{hochman_AnnMath_memoir_2015}, so we omit the proof. Define the \emph{similarity dimension} of $\GBC(\lambda,\mu,p)$ by
\[
	s(\lambda,\mu,p)
	\coloneqq \frac{p \log (p) + (1-p)\log(1-p)}{p \log (1-\mu) + (1-p)\log(1-\lambda)}.
\]

\begin{theorem}
	\label{thm:singular_region}
	Fix $p\in(0,1)$. Then $\GBC(\lambda,\mu,p)$ is singular for all $(\lambda,\mu) \in (0,1)^2$ such that $s(\lambda,\mu,p)<1$, and it is absolutely continuous for Lebesgue-almost all $(\lambda,\mu) \in (0,1)^2$ such that $s(\lambda,\mu,p) > 1$.
\end{theorem}

\subsection{Properties of the stationary distribution of opinions}

Let us come back to the equilibrium distribution $\rho_\infty$ of the mean-field PDE \eqref{eq:PDE}, to which end we apply Theorem \ref{thm:GBC} with
\[
	p
	= p_\infty
	= \frac{1-m_\infty}{2}
	= \frac{\sqrt{\lambda}}{\sqrt{\mu}+\sqrt{\lambda}}.
\]

\subsubsection{Case $\lambda + \mu > 1$: opinion fragmentation}

Since $\lim_t \rho_t = \rho_\infty$, guaranteed by Theorem \ref{thm:Y}, point \ref{thm:GBC:Hausdorff} of Theorem \ref{thm:GBC} establishes mathematically the anticipated opinion fragmentation phenomenon of the mean-field model: if the agents' tendency to move toward the extreme political standpoints $\pm 1$ is sufficiently strong---in other words, if $\lambda + \mu > 1$---then the distribution of opinions becomes more and more fragmented over time. In the stationary regime, agents’ opinions concentrate on the fractal set $\supp(\rho_\infty)$ with Hausdorff dimension $D \in (0,1)$ which is the unique solution of the equation
\[
		(1-\mu)^D + (1-\lambda)^D = 1.
\]
	
In the special case when $\lambda = \mu > 1/2$, it is known that $\rho_\infty$ is actually the uniform distribution on its fractal support. For general $\lambda < \mu$ (still assuming $\lambda + \mu > 1$), it is no longer uniform. To better illustrate the behavior of $\rho_\infty$, we performed numerical simulations, whose results we present in Figure \ref{fig:numerics_fractal}. We approximated the evolution of the mean-field PDE \eqref{eq:PDE} for three choices of $(\lambda, \mu)$ with $\lambda + \mu > 1$. We used the uniform distribution in $[-1, 1]$ as the initial datum, and the simulation ran up to time $t = 10$, which seems enough to reach equilibrium. We display an approximation of the cumulative distribution function (CDF) of $\rho_\infty$, by means of a Monte Carlo procedure and by numerically solving the associated PDE. The fractal structure of $\rho_\infty$ can be readily observed: when $\lambda = \mu$ (first row of Figure \ref{fig:numerics_fractal}) we recover the staircase CDF of the Bernoulli convolution, whereas when $\lambda < \mu$ (second and third row) the staircase becomes skewed toward $+1$, which is to be expected since $\EE[Z_\infty] = m_\infty > 0$.
	
\def\figscale{0.625}
\def\figwidth{0.48\textwidth}
\begin{figure}
	% \centering
	\begin{subfigure}{\figwidth}
		\centering
		\includegraphics[scale=\figscale]{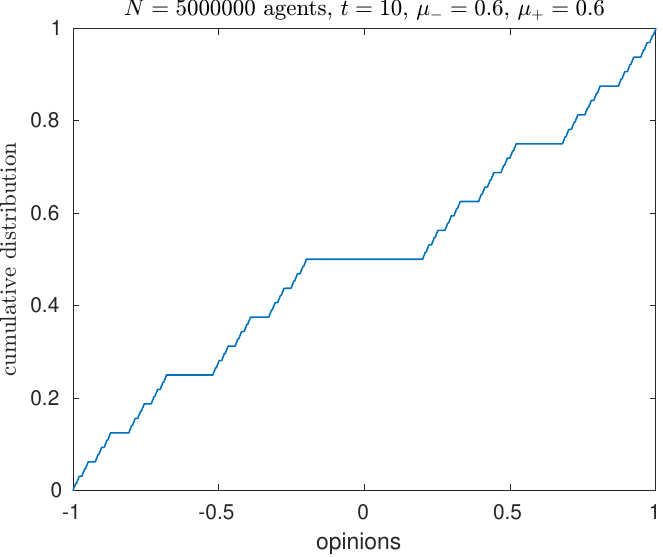}
	\end{subfigure}
	\hspace{0.1in}
	\begin{subfigure}{\figwidth}
		\centering
		\includegraphics[scale=\figscale]{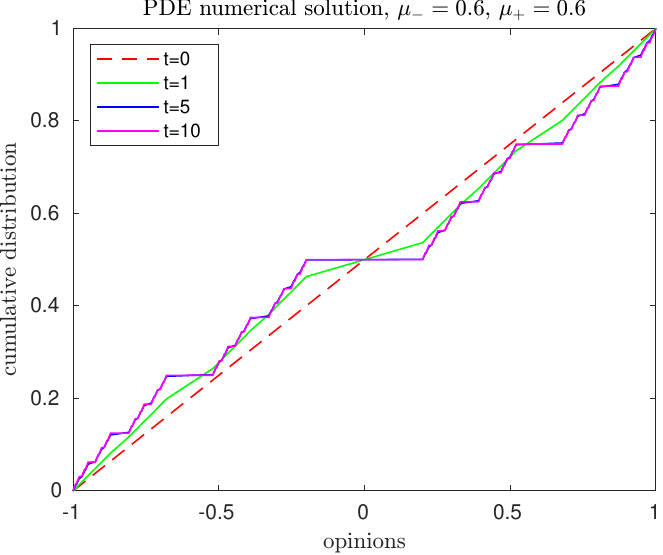}
	\end{subfigure}
	\\
	\begin{subfigure}{\figwidth}
		\centering
		\includegraphics[scale=\figscale]{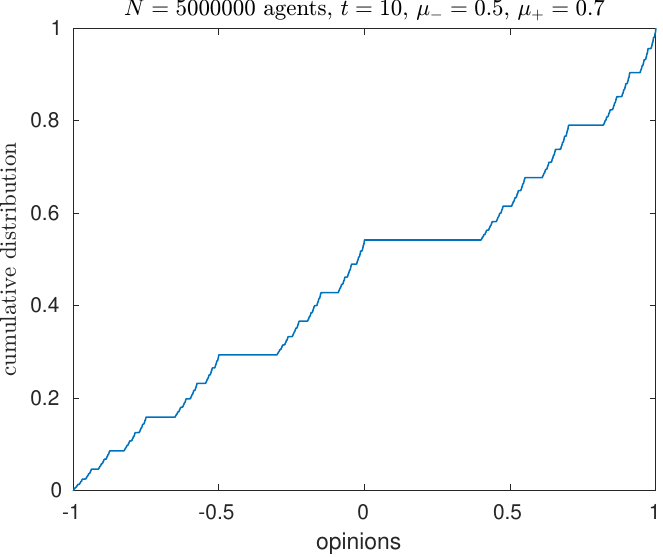}
	\end{subfigure}
	\hspace{0.1in}
	\begin{subfigure}{0.45\textwidth}
		\centering
		\includegraphics[scale=\figscale]{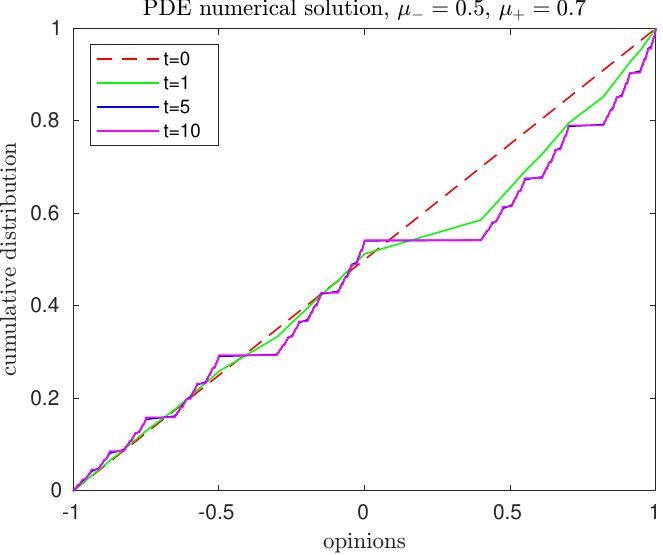}
	\end{subfigure}
	\\
	\begin{subfigure}{\figwidth}
		\centering
		\includegraphics[scale=\figscale]{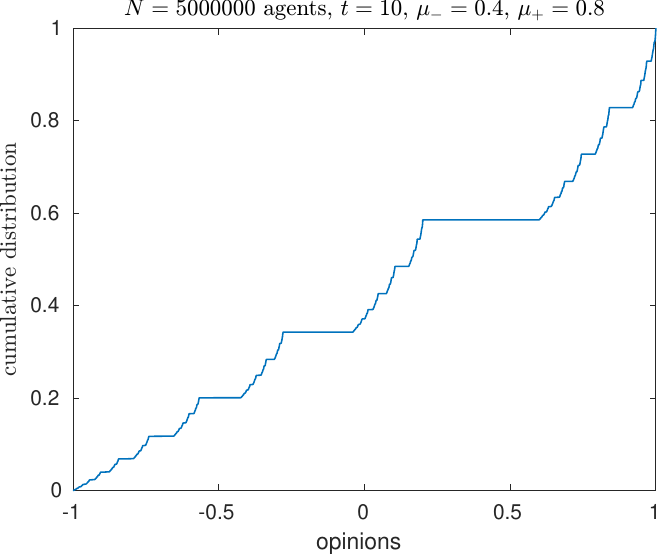}
	\end{subfigure}
	\hspace{0.1in}
	\begin{subfigure}{\figwidth}
		\centering
		\includegraphics[scale=\figscale]{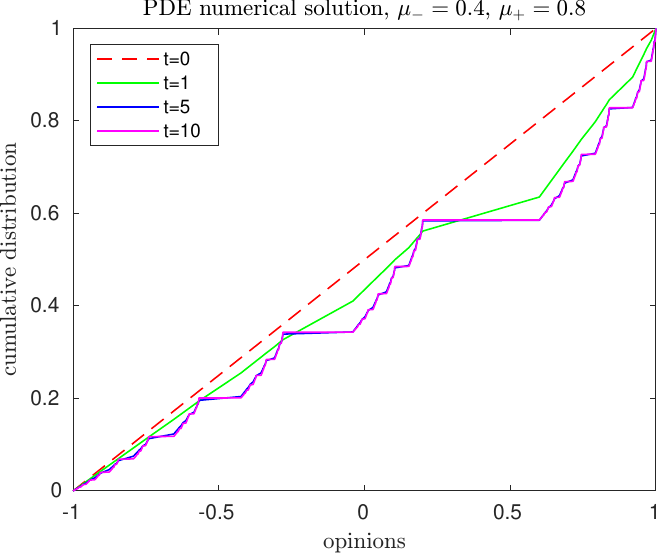}
	\end{subfigure}
	
	\caption{{\bf Left}: Monte Carlo simulation of the agent-based opinion model with $N = 5\cdot 10^6$ agents and time $t=10$, for three pairs of values of $(\lambda, \mu)$ such that $\lambda + \mu > 1$. In each plot the (empirical) cumulative distribution function (CDF) is displayed, which is a good approximation of the CDF of $\rho_\infty$. {\bf Right}: Evolution of the CDF, computed by solving numerically the mean-field PDE \eqref{eq:PDE}, for the same three pairs $(\lambda, \mu)$. The plots exhibit a fractal structure of $\supp(\rho_\infty)$, as predicted by Theorem \ref{thm:GBC}-\ref{thm:GBC:Hausdorff}.}
	\label{fig:numerics_fractal}
\end{figure}

\subsubsection{Case $\lambda + \mu \leq 1$: full support}

On the other hand, when $\lambda + \mu \leq 1$, the scenario is more subtle. Point \ref{thm:GBC:full_support} of Theorem \ref{thm:GBC} ensures that $\rho_\infty$ is supported on $[-1,1]$. The central question is whether $\rho_\infty$ is absolutely continuous with respect to the Lebesgue measure on $[-1,1]$. As a partial answer, we state the following result, which is a direct application of Theorem \ref{thm:singular_region}:
	
\begin{proposition}
	\label{prop:singular}
	Let $(\lambda,\mu) \in (0,1)^2$ be such that $\lambda + \mu \leq 1$ and
	\begin{equation}
		\label{eq:ab}
		a^a\, b^b
		>
		(1-a^2)^{b}\, (1-b^2)^{a}\, (a+b)^{a+b}
		\qquad
		\text{for~ $a = \sqrt{\lambda}$,~ $b = \sqrt{\mu}$}.
	\end{equation}
	Then $\rho_\infty$ has full support on $[-1,1]$ and it is singular continuous with respect to the Lebesgue measure.
\end{proposition}

\begin{proof}
	Thanks to Theorem \ref{thm:singular_region}, we know that $\rho_\infty$ is singular when $s(a^2,b^2,p_\infty)<1$ for $p_\infty = \frac{a}{a+b}$, that is,
	\[
	\frac{\frac{a}{a+b} \log (\frac{a}{a+b}) + \frac{b}{a+b}\log(\frac{b}{a+b})}{\frac{a}{a+b} \log (1-b^2) + \frac{b}{a+b}\log(1-a^2)}
	< 1
	\]
 A straightforward computation shows that this is equivalent to the desired inequality.
\end{proof}

\begin{figure}
	\centering
	\includegraphics[scale=.75]{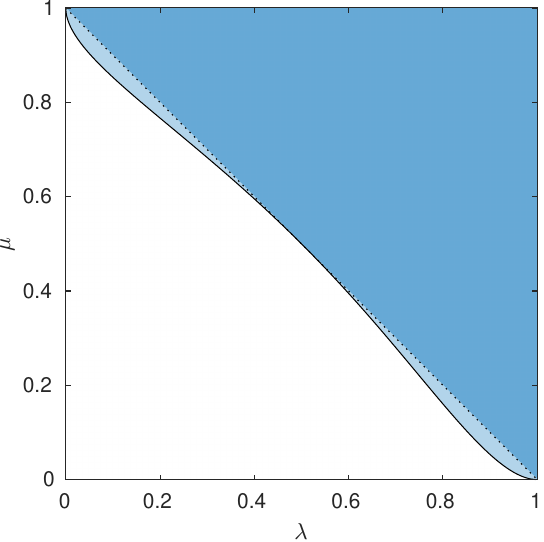}
	\caption{Region of the space $(\lambda,\mu)\in(0,1)^2$ in which $\rho_\infty$ is singular, as described by Proposition \ref{prop:singular}. Above the dotted line $\lambda+\mu=1$, the support of $\rho_\infty$ is fractal. Below it and above the solid line---specified by \eqref{eq:ab}---the support of $\rho_\infty$ is still the full interval $[-1,1]$ but $\rho_\infty$ is still singular.}
	\label{fig:critical_region}
\end{figure}

Thus, there exists a small region on $(0,1)^2$---specified by $\lambda+\mu\leq 1$ and \eqref{eq:ab}---where the support of $\rho_\infty$ is the whole interval $[-1,1]$,  but $\rho_\infty$ is still singular. We display this in Figure \ref{fig:critical_region}. In the remaining region---that is, where \eqref{eq:ab} is not satisfied---it is natural to conjecture that $\rho_\infty$ is absolutely continuous for Lebesgue-almost all such $(\lambda,\mu)$. However, since $p=p_\infty$ is a function of $(\lambda,\mu)$, this cannot be deduced by simply applying Theorem \ref{thm:singular_region}, and we were not able to prove it.

\def\figscale{0.625}
\def\figwidth{0.48\textwidth}
\begin{figure}
	% \centering
	\begin{subfigure}{\figwidth}
		\centering
		\includegraphics[scale=\figscale]{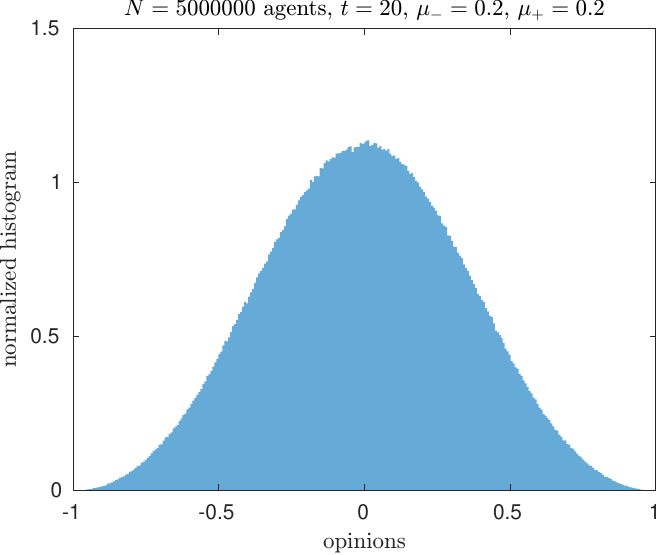}
	\end{subfigure}
	\hspace{0.1in}
	\begin{subfigure}{\figwidth}
		\centering
		\includegraphics[scale=\figscale]{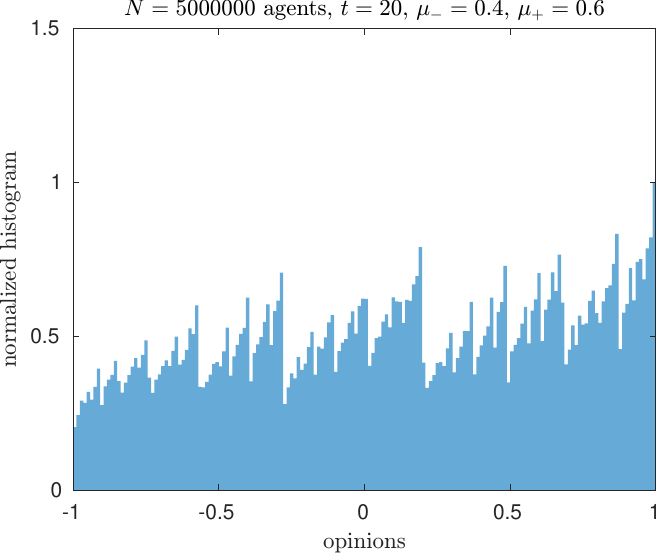}
	\end{subfigure}
	\\
	\begin{subfigure}{\figwidth}
		\centering
		\includegraphics[scale=\figscale]{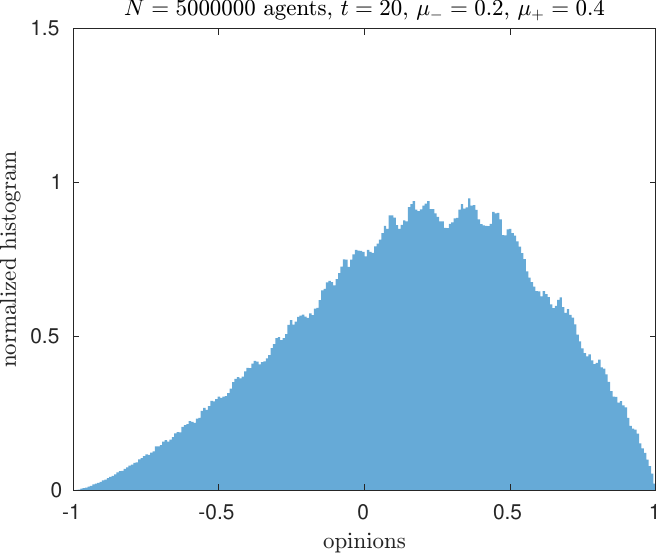}
	\end{subfigure}
	\hspace{0.1in}
	\begin{subfigure}{0.45\textwidth}
		\centering
		\includegraphics[scale=\figscale]{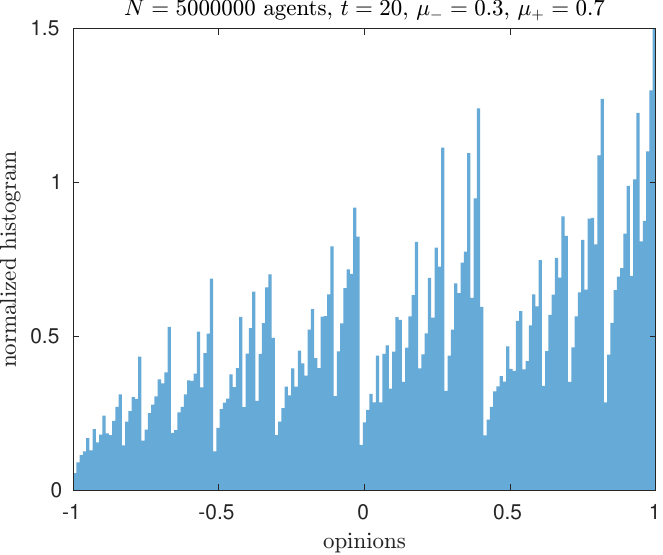}
	\end{subfigure}
	\\
	\begin{subfigure}{\figwidth}
		\centering
		\includegraphics[scale=\figscale]{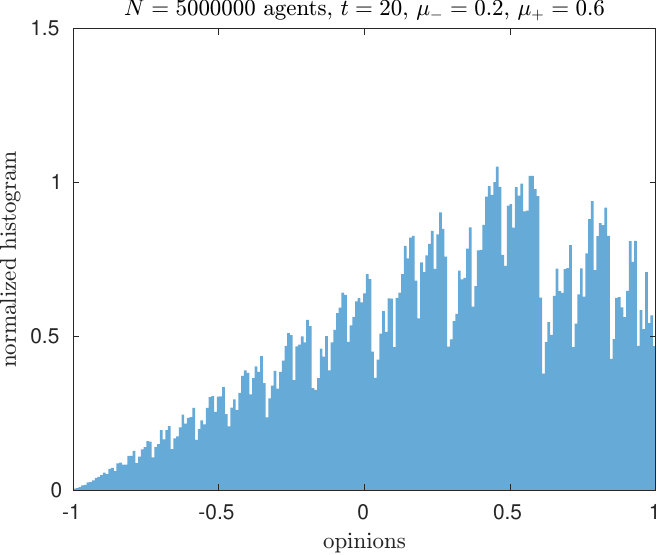}
	\end{subfigure}
	\hspace{0.1in}
	\begin{subfigure}{\figwidth}
		\centering
		\includegraphics[scale=\figscale]{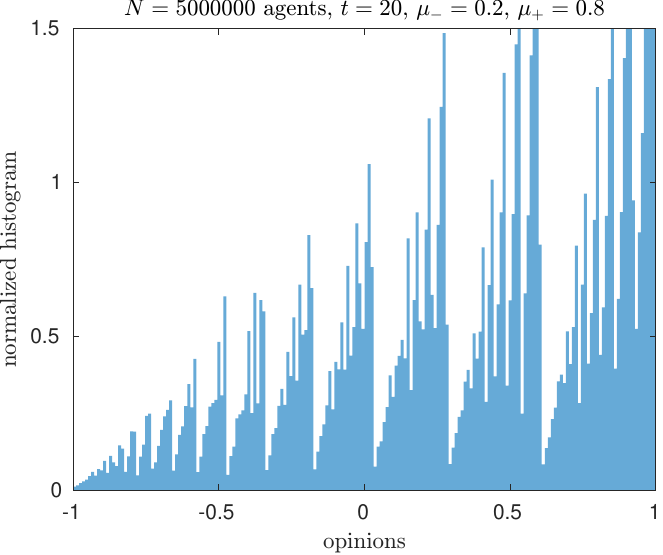}
	\end{subfigure}
	
	\caption{Monte Carlo simulation of the agent-based opinion model with $N = 5\cdot 10^6$ agents and time $t=20$, for six pairs of values of $(\lambda, \mu)$. In each plot the normalized histogram is displayed, as an approximation of the density of $\rho_\infty$, if it exists. {\bf Left:} three pairs of values $(\lambda, \mu)$ such that $\lambda + \mu < 1$. {\bf Right:} three pairs of values $(\lambda, \mu)$ such that $\lambda + \mu = 1$.
	}
	\label{fig:numerics_histogram}
\end{figure}

To better illustrate the behavior of $\rho_\infty$ in terms of $(\lambda,\mu)$, we present some numerical results in Figure \ref{fig:numerics_histogram}. For six choices of $(\lambda, \mu)$ satisfying $\lambda + \mu \leq 1$, we simulated the system of $N$ agents up to time $t=20$, which seems enough to reach equilibrium. We plot the normalized histogram of the agents as an approximation of the density of $\rho_\infty$, if it exists. When $\lambda + \mu < 1$ (left plots), the plot appears to admit a density, especially for small $\lambda$ and $\mu$, even though it becomes rougher for larger values. When $\lambda + \mu = 1$ (right plots), the agents spread across the whole interval $[-1,1]$, but the histogram becomes progressively more irregular as $\mu$ increases.

\section{Conclusion}
\label{sec:sec5}
\setcounter{equation}{0}

In this manuscript, we introduce and study a new opinion dynamics model (inspired from a recent work \cite{cao_fractal_2024}) under the large-population limit $N \to \infty$, where a mean-field description in terms of a Boltzmann-type kinetic equation emerges. We establish quantitative convergence results for the associated Boltzmann-type PDE, showing that its solution approaches a unique equilibrium distribution whose structure is strongly influenced by the model parameters. Notably, in certain parameter regimes, the stationary opinion distribution exhibits an intriguing fractal-like self-similar structure which generalizes the celebrated Bernoulli convolution, which offers a theoretical explanation for the so-called opinion fragmentation phenomenon. Our work highlights a surprising connection between interacting multi-agent systems, kinetic equations, fractal geometry and (generalized) Bernoulli convolutions, underscoring the model's interdisciplinary potential.
The presence of such complexity even in this simplified model highlights the subtlety of opinion formation mechanisms and opens up exciting avenues for further theoretical and numerical exploration. We hope that the tools and perspectives developed in this work will serve as a foundation for future efforts aimed at unraveling the intricate (fractal) geometry of collective behavior in multi-agent systems.

On the other hand, perhaps the most challenging open problem stemming from the present manuscript lies in the systematic study of the generalized Bernoulli convolution $\GBC\big(\lambda,\mu,p(\lambda,\mu)\big)$ associated with the constrained weight
\begin{equation}\label{eq:constrained-weight}
p(\lambda,\mu) \coloneqq \frac{\sqrt{\lambda}}{\sqrt{\mu}+\sqrt{\lambda}}.
\end{equation}
To the best of our knowledge, the fractal geometry community has thus far considered the family $\GBC(\lambda,\mu,p)$ in which the three parameters $(\lambda,\mu,p)\in(0,1)^3$ vary independently of one another. By contrast, the equilibrium measure arising from our opinion dynamics is supported on the two-dimensional surface cut out by the relation \eqref{eq:constrained-weight}, along which the weight $p$ is entirely determined by the pair $(\lambda,\mu)$. Our model thus motivates the investigation of a more intricate object than those studied so far, whose analysis is likely to require advanced tools from analytical/algebraic number theory, fractal geometry, probability theory, and functional analysis.

\bigskip

\noindent {\bf Acknowledgement~} Fei Cao gratefully acknowledges support from an AMS-Simons Travel Grant, administered by the American Mathematical Society with funding from the Simons Foundation. Roberto Cortez acknowledges partial support from Fondecyt Grant 1242001.

\appendix

\section{Appendix}
\setcounter{equation}{0}

\subsection{Proof of Theorem \ref{thm:GBC}}

\begin{proof}
	The proof of \ref{thm:GBC:moments} follows directly from the characterization \eqref{eq:def_of_GBC}. Indeed, taking expectation $\EE[\cdot]$ on both sides of the equation \eqref{eq:def_of_GBC} leads us to
	\begin{equation*}
		\begin{aligned}
			\EE[Z] &= p\left[(1-\mu)\,\EE[Z] + \mu\right] + (1-p)\left[(1-\lambda)\,\EE[Z] - \lambda\right] \\
			&= \left(1-p\,\mu - (1-p)\,\lambda\right)\EE[Z] + p\,\mu - (1-p)\,\lambda,
		\end{aligned}
	\end{equation*}
	from which the claimed expression for $\EE[Z]$ follows. A similar calculation also yields the advertised formula for $\EE[Z^2]$.
	
	We resort to Fourier transform for the proof of \ref{thm:GBC:uniform}. Let $\hat{\rho}(\xi) \coloneqq \EE\left[\expo^{-i\,\xi\,Z}\right]$ denote the Fourier transform of the law $\GBC(\lambda,\mu,p)$, where $\xi \in \R$. Then the identity \eqref{eq:def_of_GBC} yields that
	\begin{equation}\label{eq:Fourier_character}
		\begin{aligned}
			\hat{\rho}(\xi) &= p\,\EE\left[\expo^{-i\,\xi\,\left((1-\mu)\,Z+\mu\right)}\right] + (1-p)\,\EE\left[\expo^{-i\,\xi\,\left((1-\lambda)\,Z-\lambda\right)}\right] \\
			&= p\,\expo^{-i\,\xi\,\mu}\,\hat{\rho}\left((1-\mu)\,\xi\right) + (1-p)\,\expo^{i\,\xi\,\lambda}\,\hat{\rho}\left((1-\lambda)\,\xi\right)
		\end{aligned}
	\end{equation}
	In the special case where $p = \lambda = 1-\mu$, the Fourier characterization \eqref{eq:Fourier_character} of the law $\GBC(\lambda,1-\lambda,\lambda)$ simplifies to
	\begin{equation}\label{eq:Fourier_character_uniform}
		\hat{\rho}(\xi) = (1-\mu)\,\expo^{-i\,\xi\,\mu}\,\hat{\rho}\left((1-\mu)\,\xi\right) + \mu\,\expo^{i\,\xi\,(1-\mu)}\,\hat{\rho}\left(\mu\,\xi\right).
	\end{equation}
	It is straightforward to verify that $\hat{\rho}(\xi) = \frac{\sin(\xi)}{\xi}$, which represents the Fourier transform of the uniform distribution on $[-1,1]$, indeed satisfies the identity \eqref{eq:Fourier_character_uniform}. Hence the proof of \ref{thm:GBC:uniform} is completed.
	
	We now prove \ref{thm:GBC:abs_cont_singular}, that is, we show that $\rho \coloneqq \GBC(\lambda,\mu,p)$ is either absolutely continuous or singular continuous with respect to the Lebesgue measure on $[-1,1]$. Let $\rho = \rho_{\textrm{ac}} + \rho_s$ be the Lebesgue decomposition of $\rho$ with respect to the Lebesgue measure on $[-1,1]$. Since the Lebesgue decomposition is linear under finite positive combinations and is preserved by affine bijections, we deduce from
	\eqref{eq:GBC_equivalent_characterization} that
	\begin{equation*}
		%\label{eq:both}
		\rho_{\textrm{ac}} = (1-p)\,T_0\# \rho_{\textrm{ac}} + p\,T_1\# \rho_{\textrm{ac}} \quad \textrm{and} \quad \rho_s = (1-p)\,T_0\# \rho_s + p\,T_1\# \rho_s.
	\end{equation*}
	Suppose that $\|\rho_{\textrm{ac}}\| \coloneqq \rho_{\textrm{ac}}([-1,1]) > 0$. Then $\bar{\rho}_{\textrm{ac}} \coloneqq \frac{\rho_{\textrm{ac}}}{\|\rho_{\textrm{ac}}\|}$ is a probability measure satisfying the same equation \eqref{eq:GBC_equivalent_characterization} as $\rho$. By uniqueness of the solution to \eqref{eq:GBC_equivalent_characterization}, we deduce that $\bar{\rho}_{\textrm{ac}} = \rho$. As $\bar{\rho}_{\textrm{ac}}$ is absolutely continuous with respect to the Lebesgue measure on $[-1,1]$, so does $\rho$, hence $\rho_s = 0$. Symmetrically, the assumption that $\|\rho_s\| \coloneqq \rho_s([-1,1]) > 0$ forces $\rho_{\textrm{ac}} = 0$, whence the proof of statement \ref{thm:GBC:abs_cont_singular} is completed.
	
	We now tackle \ref{thm:GBC:atomless}: To show that the law $\rho \coloneqq \GBC(\lambda,\mu,p)$ is atomless, we assume that $\rho$ has an atom. Then $\alpha \coloneqq \sup_{x \in [-1,1]} \rho(\{x\}) > 0$ and the set $E \coloneqq \left\{x \in [-1,1] \mid \rho(\{x\}) = \alpha\right\}$ is a non-empty finite set. We pick an arbitrary $x_0 \in E$ and employ the characterization \eqref{eq:GBC_equivalent_characterization} to obtain
	\begin{equation}\label{eq:stationarity_at_x0}
		\alpha = \rho(\{x_0\}) = (1-p)\,\rho(\{T^{-1}_0(x_0)\}) + p\,\rho(\{T^{-1}_1(x_0)\}).
	\end{equation}
	Since $\rho(\{T^{-1}_0(x_0)\}) \leq \alpha$ and $\rho(\{T^{-1}_1(x_0)\}) \leq \alpha$, the equality \eqref{eq:stationarity_at_x0} implies that $\rho(\{T^{-1}_0(x_0)\}) = \rho(\{T^{-1}_1(x_0)\}) = \alpha$. Thus from any maximal atom, both pre-images are again maximal atoms and belong to $[-1,1]$. Upon iteration, $T^{-n}_1(x_0) = 1 + (1-\mu)^{-n}\,(x_0 - 1)$ must lie in $[-1,1]$ for all $n\in \mathbb N$, which cannot hold true unless $x_0 = 1$. Similarly, $T^{-n}_0(x_0) = -1 + (1-\lambda)^{-n}\,(x_0 + 1)$ must lie in $[-1,1]$ for all $n\in \mathbb N$, which forces $x_0 = -1$. Because $x_0 = 1$ and $x_0 = -1$ cannot hold simultaneously, we reach a contradiction, whence $\alpha = 0$ and the law $\rho$ has no atom.

	We now tackle \ref{thm:GBC:full_support}-\ref{thm:GBC:Hausdorff}. Denote $\cS \coloneqq \supp\left(\GBC(\lambda,\mu,p)\right)$. From \eqref{eq:def_of_GBC}, we see that $\cS$ satisfies
	\begin{equation}
		\label{eq:supp_S}
		\cS = \left[(1-\mu)\,\cS + \mu\right] \cup \left[(1-\lambda)\,\cS - \lambda\right].
	\end{equation}
	Properties \ref{thm:GBC:full_support}-\ref{thm:GBC:Hausdorff} follow from the self-similarity condition \eqref{eq:supp_S}, by applying well-known results in the literature of fractal geometry; see for instance \cite{hutchinson_fractals_1981} for details.
	
	To prove \ref{thm:GBC:mutually_singular}, we rely on the strong law of large numbers and an argument inspired by the monograph \cite{bishop_fractals_2017}. The key component consists of investigating how the measure $\GBC(\lambda,1-\lambda,p)$ assigns mass to cylinder sets. Let $I = [-1,1]$ and recall the two contractions $T_0, T_1 \colon I \to I$ given by $T_0(x) = \mu\, x - \lambda$ and $T_1(x) = \lambda\, x + \mu$. Let $\Omega^n \coloneqq \{0,1\}^n$ be the set of all binary strings of length $n \in \mathbb{N}_+$. For a given binary sequence $\sigma = (\sigma_1,\ldots,\sigma_n) \in \Omega^n$, the cylinder set $I_\sigma$ is defined as the image of the base interval $I$ under the composition of the maps corresponding to $\sigma$:
	\begin{equation*}
		I_\sigma \coloneqq T_{\sigma_1} \circ T_{\sigma_2} \circ \cdots \circ T_{\sigma_n}(I).
	\end{equation*}
	By virtue of an induction argument, we see that for each $n \geq 1$, the collection of cylinders $\{I_\sigma \colon \sigma \in \Omega^n\}$ consists of $2^n$ intervals whose interiors are disjoint and whose union is exactly $[-1,1]$. Since the measure $\GBC(\lambda,1-\lambda,p)$ satisfies
	\begin{equation*}
		\GBC(\lambda,1-\lambda,p) = (1-p)\GBC(\lambda,1-\lambda,p)\circ T^{-1}_0 + p\GBC(\lambda,1-\lambda,p)\circ T^{-1}_1
	\end{equation*}
	and the cylinders at level $n$ have disjoint interiors, we deduce that
	\begin{equation*}
		\GBC(\lambda,1-\lambda,p)\left\{I_\sigma\right\} = (1-p)^{n_0}\,p^{n_1} \quad \text{and} \quad \GBC(\lambda,1-\lambda,q)\left\{I_\sigma\right\} = (1-q)^{n_0}\,q^{n_1},
	\end{equation*}
	where $n_0$ and $n_1$ represent the number of $0$s and $1$s appearing in the binary string $\sigma \in \Omega^n$.
	
	Recall that every point $x \in [-1,1]$ can be associated with an infinite binary sequence $\sigma \in \Omega^{\mathbb{N}}$ representing its path through the cylinders. Let $N_1(x,n)$ be the number of $1$'s in the first $n$ symbols of the representation of $x$ and define the set $E_\alpha$ as the set of points where the asymptotic frequency of positive pulls is $\alpha \in (0,1)$:
	\begin{equation*}
		E_\alpha \coloneqq \left\{x \in [-1,1] \colon \lim_{n \to \infty} \frac{N_1(x,n)}{n} = \alpha \right\}.
	\end{equation*}
	Since the collection $\{\sigma_i\}$ consists of independent Bernoulli random variables with parameter $p$ with respect to $\GBC(\lambda,1-\lambda,p)$ and parameter $q$ with respect to $\GBC(\lambda,1-\lambda,q)$, the strong law of large numbers implies that
	\begin{equation*}
		\GBC(\lambda,1-\lambda,p)\left\{E_p\right\} = 1 \quad \text{and} \quad \GBC(\lambda,1-\lambda,q)\left\{E_q\right\} = 1.
	\end{equation*}
	Because $p \neq q$ by assumption, the sets $E_p$ and $E_q$ are disjoint, whence the measures $\GBC(\lambda,1-\lambda,p)$ and $\GBC(\lambda,1-\lambda,q)$ are mutually singular.
	
	To prove \ref{thm:GBC:Gaussian}, we notice that the relation $\frac{\mu}{\lambda} = \frac{1-p}{p}$ guarantees that $X$ is a mean zero random variable with unit variance. Thus if $\hat{f}(\xi) = \coloneqq \EE\left[\expo^{-i\,\xi\,X}\right]$ denotes the Fourier transform of the law $f$ of $X$, we have $\hat{f}(0) = 1$, $\hat{f}'(0) = -i\,\EE[X] = 0$, and $\hat{f}''(0) = -\EE[X^2] = -1$. Moreover, a straightforward computation in the same spirit as the derivation of \eqref{eq:Fourier_character} shows that the Fourier characterization of $f$ is given by
	\begin{equation}\label{eq:Fourier_character_X}
		\hat{f}(\xi) = p\,\expo^{-i\,\xi\,\frac{\mu}{\sigma}}\,\hat{f}\left((1-\mu)\,\xi\right) + (1-p)\,\expo^{i\,\xi\,\frac{\lambda}{\sigma}}\,\hat{f}\left((1-\lambda)\,\xi\right).
	\end{equation}
	Next, we observe that in the asymptotic regime where $\mu \ll 1$ and $\lambda \ll 1$ with $\frac{\mu}{\lambda} = \frac{1-p}{p}$ held fixed, it holds that $\frac{\mu^2}{\sigma^2} = \cO(\mu)$ and $\frac{\lambda^2}{\sigma^2} = \cO(\lambda)$. Consequently, by Taylor expanding the right-hand side of \eqref{eq:Fourier_character_X} (and ignoring higher order terms), we end up with
	\begin{equation*}
		%\label{eq:Gaussian_Fourier}
		\begin{aligned}
			\lim_{\substack{\mu,\lambda \to 0 \\ \frac{\mu}{\lambda} = \frac{1-p}{p}}} \frac{\hat{f}'(\xi)}{\hat{f}(\xi)} &= \lim_{\substack{\mu,\lambda \to 0 \\ \frac{\mu}{\lambda} = \frac{1-p}{p}}} \frac{-\frac 12\,\xi\,\frac{1}{\sigma^2}\,\left[p\,\mu^2 + (1-p)\,\lambda^2\right]}{p\,\mu + (1-p)\,\lambda} \\
			&= \lim_{\substack{\mu,\lambda \to 0 \\ \frac{\mu}{\lambda} = \frac{1-p}{p}}} \frac{-\frac 12\,\xi\,\left[1-p\,(1-\mu)^2 - (1-p)\,(1-\lambda)^2\right]}{p\,\mu + (1-p)\,\lambda} \\
			&= -\frac{\xi}{2}\,\lim_{\mu \to 0} \frac{1-p\,(1-\mu)^2-(1-p)\,\left(1-\frac{\mu\,p}{1-p}\right)^2}{2\,\mu\,p} \\
			&= -\frac{\xi}{2}\,\lim_{\mu \to 0} \frac{-2\,p\,(\mu-1)-2\,(1-p)\,\left(\frac{\mu\,p}{1-p}-1\right)\,\frac{p}{1-p}}{2\,p} \\
			&= -\frac{\xi}{2}\,\lim_{\mu \to 0} \left[1-\mu + 1-\frac{\mu\,p}{1-p}\right] = -\xi.
		\end{aligned}
	\end{equation*}
	As a result, in the limit $\mu \to 0$ and $\lambda \to 0$ while keeping the ratio $\frac{\mu}{\lambda} = \frac{1-p}{p}$ frozen, the Fourier transform of the law $f$ of $X$ converges to the solution of the first-order separable ODE $\frac{\hat{f}'(\xi)}{\hat{f}(\xi)} = -\xi$, which leads us to $\hat{f}(\xi) = \expo^{-\frac{\xi^2}{2}}$ (i.e., the Fourier transform of the standard normal distribution). This completes the proof of the statement \ref{thm:GBC:Gaussian}.
	
	Finally, we prove \ref{thm:GBC:W1}. We let ${\bf \theta} = (\lambda,\mu,p)$ and introduce the functional $F_{\bf \theta} \colon \mathcal{P}([-1,1]) \to \mathcal{P}([-1,1])$ by setting
	\begin{equation*}
		%\label{def:operator}
		F_{\bf \theta}[\nu] \coloneqq (1-p)\,T_{0,\lambda} \# \nu + p\,T_{1,\mu}\# \nu
	\end{equation*}
	for $\nu \in \mathcal{P}([-1,1])$. Then the law $\nu_{\bf \theta} \coloneqq \GBC(\lambda,\mu,p)$ is the unique fixed point of $F_{\bf \theta}$. We first prove the following contraction estimate
	\begin{equation}\label{eq:contraction_W1}
		W_1\left(F_{\bf \theta}[\nu], F_{\bf \theta}[\nu'] \right) \leq \left(1-p\,\mu - (1-p)\,\lambda\right)\,W_1(\nu,\nu')
	\end{equation}
	which holds for all $\nu, \nu' \in \mathcal{P}([-1,1])$. Indeed, assume that $Z \sim \nu$, $Z' \sim \nu'$, and $\mathcal{B}_p$ is a Bernoulli random variable (independent of $Z$ and $Z'$) with parameter $p$, then
	\begin{equation*}
		\mathcal{B}_p\,[(1-\mu)\,Z + \mu] + (1-\mathcal{B}_p)\,[(1-\lambda)\,Z - \lambda] \sim F_{\bf \theta}[\nu]
	\end{equation*}
	and \begin{equation*}
		\mathcal{B}_p\,[(1-\mu)\,Z' + \mu] + (1-\mathcal{B}_p)\,[(1-\lambda)\,Z' - \lambda] \sim F_{\bf \theta}[\nu'].
	\end{equation*}
	Therefore, we deduce that
	\begin{equation*}
		%\label{eq:chain_of_estimates_2}
		\begin{aligned}
			W_1\left(F_{\bf \theta}[\nu], F_{\bf \theta}[\nu'] \right) &\leq \EE\left|\cB_p\,(1-\mu)\,(Z-Z') + (1-\cB_p)\,(1-\lambda)\,(Z-Z')\right|\\
			&\leq \EE[\cB_p\,(1-\mu) + (1-\cB_p)\,(1-\lambda)]\cdot \EE|Z-Z'| \\
			&= (p\,(1-\mu) + (1-p)\,(1-\lambda))\,\EE|Z-Z'| \\
			&= \left(1-p\,\mu - (1-p)\,\lambda\right)\,\EE|Z-Z'|,
		\end{aligned}
	\end{equation*}
	whence the claimed estimate \eqref{eq:contraction_W1} follows by choosing the optimal coupling between $Z$ and $Z'$ with respect to $W_1$. Now we denote ${\bf \theta}' = (\lambda',\mu',p')$ and claim that
	\begin{equation}\label{eq:parameter_perturbation}
		W_1\left(F_{\bf \theta}[\nu], F_{{\bf \theta}'}[\nu] \right) \leq 2\,(|\lambda-\lambda'| + |\mu-\mu'| + |p-p'|)
	\end{equation}
	for all $\nu \in \mathcal{P}([-1,1])$. Indeed, we introduce the intermediate measure defined by
	\begin{equation*}
		%\label{eq:M_measure}
		\cM_{\lambda',\mu',p}\,\nu \coloneqq p\,T_{1,\mu'}\# \nu + (1-p)\,T_{0,\lambda'}\# \nu.
	\end{equation*}
	Thanks to the triangle inequality, we have
	\[W_1\left(F_{\bf \theta}[\nu], F_{{\bf \theta}'}[\nu] \right) \leq W_1\left(F_{\bf \theta}[\nu], \cM_{\lambda',\mu',p}\,\nu\right) + W_1\left(\cM_{\lambda',\mu',p}\,\nu, F_{{\bf \theta}'}[\nu]\right).\]
	Since for any $z \in [-1,1]$ it holds that $|T_{0,\lambda}(z) - T_{0,\lambda'}(z)| \leq 2\,|\lambda-\lambda'|$ and $|T_{1,\mu}(z) - T_{1,\mu'}(z)| \leq 2\,|\mu-\mu'|$, a probabilistic coupling argument similar to the derivation of \eqref{eq:contraction_W1} gives rise to
	\begin{equation*}
		%\label{eq:IE1}
		W_1\left(F_{\bf \theta}[\nu], \cM_{\lambda',\mu',p}\,\nu\right) \leq 2\,|\lambda-\lambda'| + 2\,|\mu-\mu'|.
	\end{equation*}
	On the other hand, for any $1$-Lipschitz continuous function $\phi \colon [-1,1] \to \mathbb R$, we observe that
	\begin{equation}\label{eq:observation}
		\int_{-1}^1 \phi \, \dd \left(\cM_{\lambda',\mu',p}\,\nu - F_{{\bf \theta}'}[\nu]\right) = (p-p')\,\int_{-1}^1 \phi \, \dd \left(T_{1,\mu'}\# \nu - T_{0,\lambda'}\# \nu\right).
	\end{equation}
	Using the Kantorovich–Rubinstein duality formulation of the $W_1$ distance and taking $\phi$ to a Kantorovich potential so that
	\begin{equation*}
		W_1\left(T_{1,\mu'}\# \nu, T_{0,\lambda'}\# \nu\right) = \int_{-1}^1 \phi \, \dd \left(T_{1,\mu'}\# \nu - T_{0,\lambda'}\# \nu\right),
	\end{equation*}
	we deduce from the identity \eqref{eq:observation} that
	\begin{equation}\label{eq:IE2}
		W_1\left(\cM_{\lambda',\mu',p}\,\nu, F_{{\bf \theta}'}[\nu]\right) \leq |p-p'|\,W_1\left(T_{1,\mu'}\# \nu, T_{0,\lambda'}\# \nu\right) \leq 2\,|p-p'|,
	\end{equation}
	where the last inequality in \eqref{eq:IE2} follows from the fact the interval $[-1,1]$ is compact with diameter $2$. As $\nu_{\bf \theta} = F_{\bf \theta}[\nu_{\bf \theta}]$ and $\nu_{{\bf \theta}'} = F_{{\bf \theta}'}[\nu_{{\bf \theta}'}]$, we apply the triangle inequality together with the contraction estimate \eqref{eq:contraction_W1} to obtain
	\begin{align*}
		W_1(\nu_{\bf \theta}, \nu_{{\bf \theta}'}) &\leq W_1\left(F_{\bf \theta}[\nu_{\bf \theta}], F_{\bf \theta}[\nu_{{\bf \theta}'}]\right) + W_1\left(F_{\bf \theta}[\nu_{{\bf \theta}'}], F_{{\bf \theta}'}[\nu_{{\bf \theta}'}]\right) \\
		&\leq \left(1-p\,\mu - (1-p)\,\lambda\right)\,W_1(\nu_{\bf \theta}, \nu_{{\bf \theta}'}) + \sup_{\nu \in \cP([-1,1])} W_1\left(F_{\bf \theta}[\nu], F_{{\bf \theta}'}[\nu]\right).
	\end{align*}
	Therefore, we conclude the proof by using the estimate \eqref{eq:parameter_perturbation} as follows:
	\begin{equation*}
		\begin{aligned}
			W_1(\nu_{\bf \theta}, \nu_{{\bf \theta}'}) &\leq \frac{1}{p\,\mu + (1-p)\,\lambda}\,\sup_{\nu \in \cP([-1,1])} W_1\left(F_{\bf \theta}[\nu], F_{{\bf \theta}'}[\nu]\right) \\
			&\leq \frac{2}{p\,\mu + (1-p)\,\lambda}\,(|\lambda-\lambda'| + |\mu-\mu'| + |p-p'|).
		\end{aligned}
	\end{equation*}
\end{proof}

\end{document}